\documentclass{amsart}
\usepackage[all]{xy}
\usepackage{amsmath}
\usepackage{amssymb}
\usepackage{amsthm}
\usepackage{amscd}

\newcommand{\M}{\mathcal{M}}
\newcommand{\V}{\mathcal{V}}

\newcommand{\F}{\mathcal{F}}

\newcommand{\s}{\mathcal{S}}

\newcommand{\Z}{\Bbb Z}

\newcommand{\RR}{\Bbb R}

\newcommand{\C}{\Bbb C}

\newtheorem{theorem}{Theorem}

\newtheorem{proposition}[theorem]{Proposition}
\newtheorem{cor}[theorem]{Corollary}
\newtheorem{lemma}[theorem]{Lemma}
\newtheorem{problem}[theorem]{Problem}
\newtheorem{remark}[theorem]{Remark}

\DeclareMathOperator{\supp}{supp}

\numberwithin{theorem}{section}

\begin{document}
\title[Some Optimizations]{Some Optimizations for (Maximal) Multipliers in $L^p$}
\author{Ben Krause}
\address{UCLA Math Sciences Building\\
         Los Angeles,CA 90095-1555}
\email{benkrause23@math.ucla.edu}
\date{\today}
\maketitle

\begin{abstract}
Using the Multi-Frequency Calder\'{o}n-Zygmund decomposition of \cite{N1}, and a discretization suggested in the arguments of \cite{D} we lower estimates of \cite{N1} and \cite{D}.

We also use these techniques to improve the results of \cite{CRS}.
\end{abstract}

\section{Introduction}
In his celebrated paper \cite{B1}, Bourgain proved the following:

\begin{theorem}[Lemma 4.11 of \cite{B1}]
For any collection of $N$ frequencies, $\Sigma := \{\xi_1,\dots, \xi_N \} \subset \RR$, then
\[ \left\| \sup_j | \left( \hat{f} 1_{R_j} \right)^{\vee}(x) \right\|_{L^2(\RR)} \lesssim \log^2 N \|f\|_{L^2(\RR)},\]
where $R_j$ is the $2^{-j}$ neighborhood of $\Sigma$. (We refer the reader to \S 1.2 for the precise definition of the $\lesssim$ notation)
\end{theorem}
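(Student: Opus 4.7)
The plan is to pass from the sharp-cutoff maximal operator to a smoothed analogue and to extract the two $\log N$ factors from separate ingredients: a scale-type orthogonality handled via a Rademacher--Menshov-style maximal inequality, and a frequency-type orthogonality handled via a multi-frequency square function.

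First, smoothing. Choose a Schwartz bump $\phi$ with $\hat\phi \geq 1$ on $[-1,1]$ and supported in $[-2,2]$, set $m_j(\eta) := \sum_{k=1}^N \hat\phi(2^j(\eta-\xi_k))$, and define $\tilde T_jf := (\hat f\, m_j)^{\vee}$. A standard Cordoba--Fefferman majorant argument shows that the smoothed operator dominates the sharp projection in the $L^2$-maximal sense at the cost of an absolute constant. The gain is that $\tilde T_j f = f * K_j$ with $K_j(x) = \phi_j(x)\sum_k e^{2\pi i\xi_k x}$ a genuine Schwartz convolution kernel, so one can manipulate it freely.

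Second, I would reduce the supremum to a square function. The increments $\Delta_j f := (\tilde T_{j+1} - \tilde T_j)f$ have pairwise disjoint Fourier supports --- narrow annular pieces around each $\xi_k$ --- hence Plancherel yields $\sum_j\|\Delta_j f\|_2^2 \lesssim \|f\|_2^2$. The crude long-jump bound is too weak, but the Rademacher--Menshov maximal inequality applied to this orthogonal family produces
\[\Bigl\|\sup_j\Bigl|\sum_{i\leq j}\Delta_i f\Bigr|\Bigr\|_2 \lesssim (\log M)\|f\|_2,\]
where $M$ is a count of the effective scale-frequency pairs. A discretization of the frequency-scale plane that groups $\xi_k$ by proximity and truncates scales to dyadic blocks between the diameter of $\Sigma$ and the minimum gap bounds $M$ by a polynomial in $N$, yielding the first $\log N$.

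Third, the $N$-fold superposition across the frequencies has to contribute a second $\log N$. I would decompose each increment as $\Delta_j = \sum_k\Delta_j^k$ supported near $\xi_k$, organize the $\xi_k$ into a binary tree of clusters (two frequencies share an ancestor at level $j$ iff $|\xi_k - \xi_{k'}| \lesssim 2^{-j}$), and handle the superposition via a Calder\'{o}n--Zygmund decomposition adapted to multiple frequencies in the spirit of \cite{N1}, or equivalently a Littlewood--Paley square function over the $\log N$ tree levels. The main obstacle --- and the reason the proof is delicate --- is that the sup over scales and the sum over frequencies are entangled: a naive Cauchy--Schwarz in $k$ costs $\sqrt N$ rather than $\log N$, so the book-keeping must simultaneously exploit the near-orthogonality of sub-projections at distinct $\xi_k$ at each scale and the telescoping orthogonality across scales. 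Multiplying the two $\log N$ factors produces the advertised $\log^2 N$ bound.
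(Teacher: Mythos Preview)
Your sketch diverges from the paper's (and Bourgain's) approach, which uses neither Rademacher--Menshov nor a frequency-cluster tree. The argument in \S 2 runs through metric entropy: after smoothing, an averaging trick over translations $T_y$, $0\le y<\tfrac{1}{100}$, reduces matters to a mixed $L^2_y L^2_x$ estimate; the inner $L^2_y$ norm is bounded pointwise in $x$ by an entropy integral $\int_0^\infty \min\{M_\lambda(x)^{1/2}, N^{1/2}\}\,d\lambda$ (Lemma~\ref{comb}), and the $L^2_x$ norm of this integral is $\lesssim \log^2 N$ by Bourgain's Lemma~3.33. Both logarithms come from a single entropy estimate, not from two separate mechanisms as you propose.

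Your outline has a genuine gap at Step~3. You correctly flag that Cauchy--Schwarz across frequencies loses $\sqrt N$, but neither remedy you offer recovers a logarithm. The multi-frequency Calder\'on--Zygmund decomposition of \cite{N1} is an $L^1$-endpoint device whose output is precisely a $\sqrt N$ prefactor (cf.\ Proposition~\ref{1-1}), not $\log N$; invoking it for an $L^2$ bound is a category error. The binary-tree cluster idea is in the right spirit, but you have not said what estimate holds at each of the $\log N$ tree levels, nor why those levels combine additively rather than multiplicatively with the $\sqrt N$ loss you are trying to avoid---the ``book-keeping'' you defer is the entire content of the theorem. There is also a smaller issue in Step~2: for non-separated $\Sigma$ the multipliers $m_j$ can have magnitude $\sim N$ where bumps overlap, so the claimed orthogonality $\sum_j \|\Delta_j f\|_2^2 \lesssim \|f\|_2^2$ fails as written; in the paper this reduction to the separated case (Proposition~\ref{main2}) already costs an additional $\log N$.
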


This beautiful lemma was proven via a robust argument which combined Fourier analysis and a metric entropy approach; it proved a key ingredient in his treatment of pointwise polynomial ergodic theorems. We further remark that this result is essentially sharp \cite{B2}, since it has been shown that at least a power of $\log^{1/4}N$ is needed on the right-hand side.

In recent years, this result has been generalized in a number of ways. In \cite[Theorem 8.7]{D1} a weighted $(L^2$-) version of the above result was used to extend Bourgain's Return Times theorem. In \cite[Theorem 1.5]{D}, the weighted result was extended to the lower $L^p$ regime, $1<p \leq 2$ by using elegant time-frequency techniques. The boundary $p=1$ case remained out of reach, however, until Nazarov, Oberlin and Thiele developed a Multi-Frequency Calder\'{o}n Zygmund decomposition \cite{N1}. There, they further extended the $L^2$-weighted result of \cite{D1} to the variational setting \cite[Theorem 1.2]{N1}, and additionally studied the $L^1$ endpoint \cite[Theorem 5.1]{N1}.

In this note, we improve the main results of the above papers \cite[Theorem 1.5]{D}, \cite[Theorem 1.2]{N1} (see below for the precise statements). We do so by establishing an optimal estimate for some metric entropy calculuations (see Lemma \ref{Tech} for a precise statement) and through more delicate use of the Multi-Frequency Calder\'{o}n Zygmund decomposition (see \S 3).

Our main $L^2$-result (\S 2) is the following

\begin{proposition}\label{main}
Suppose that $\Sigma:=\{\xi_1,\dots,\xi_N\} \subset \RR$ are 1-separated frequencies, and
define
\[ \aligned
D_k (f) &:= \sum_{k=1}^N \int \hat{f}(\xi) \widehat{\phi_{2^k}}(\xi-\xi_j) e(\xi x) \ d\xi  \\
&= \sum_{k=1}^N e(\xi_j x) \int \hat{f}(\xi+\xi_j) \widehat{\phi_{2^k}}(\xi) e(\xi x) \ d\xi, \endaligned\]
where we abbreviate $e(t):= e^{2\pi i t}$.
Then, for $q > 2$,
\[ \left\| \V^q(D_k (f) \ \big| k \geq 1 ) \right\|_{L^2_x(\RR)} \lesssim (\log N)^2 \cdot \left( 1 + \frac{q}{q-2} \right)^2 \cdot \|f\|_{L^2_x(\RR)},\]
where the \emph{q-variation operator}, $\V^q$, is defined below.
\end{proposition}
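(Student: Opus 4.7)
The plan is to combine the chaining/metric-entropy framework of Bourgain (sharpened by Lemma \ref{Tech}) with a L\'epingle-type $q$-variation estimate. In the bound $(\log N)^2 (1+q/(q-2))^2$, each of the two factors appears squared, which suggests a two-level structure in which both the logarithmic loss and the $q$-dependence enter once through a ``long'' (sparse) variation step and once through a ``short'' (dense) variation step.

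Concretely, I would dyadically split the scale parameter by setting $K_m = 2^m$ and using the standard long/short decomposition of the $q$-variation:
\[
\V^q\bigl(D_k f \mid k \geq 1\bigr) \leq \V^q\bigl(D_{K_m} f \mid m \geq 0\bigr) + \Bigl(\sum_{m \geq 0} \V^q\bigl(D_k f \mid K_m \leq k \leq K_{m+1}\bigr)^q \Bigr)^{1/q}.
\]
The long variation runs over the sparse subsequence $\{K_m\}$, while the short variations live inside the dyadic blocks $[K_m, K_{m+1}]$. On the frequency side, the differences $(D_{K_{m+1}} - D_{K_m}) f$ are supported in thin annuli around the $1$-separated set $\Sigma$, so Plancherel gives $\sum_m \|(D_{K_{m+1}} - D_{K_m}) f\|_2^2 \lesssim \|f\|_2^2$. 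Applying L\'epingle's inequality to this sequence and absorbing the loss through Bourgain's chaining argument with the improved entropy input of Lemma \ref{Tech}, one should obtain
\[
\bigl\| \V^q(D_{K_m} f \mid m \geq 0) \bigr\|_2 \lesssim (\log N)\bigl(1 + \tfrac{q}{q-2}\bigr) \|f\|_2.
\]

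For the short variation inside each block, I would again exploit the almost-orthogonality of the fine differences $D_k f - D_{k-1} f$ in $L^2$ and apply L\'epingle (or Rademacher--Menshov) inside each block, producing a per-block $V^q$-bound of order $\log(K_{m+1}-K_m)(1+q/(q-2))\|f\|_2 \sim m(1+q/(q-2))\|f\|_2$. Only $O(\log N)$ values of $m$ are relevant (once $2^{K_m}$ exceeds the smallest separation seen in $\Sigma$, the frequency shells become trivial), so taking the $\ell^q$-norm of these short variations and using $q > 2$ to convert from $\ell^\infty$ to $\ell^q$ summation over blocks with the gain $1 + q/(q-2)$ contributes the remaining factors $(\log N)(1 + q/(q-2))$.

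The main obstacle I anticipate is the first factor of $\log N$: without Lemma \ref{Tech}, a naive chaining for the long piece yields $(\log N)^{5/2}$ or worse rather than $(\log N)^2$, and the purpose of the improved metric-entropy estimate is exactly to remove this slack. A secondary concern is that the short-variation step must use L\'epingle's sharp constant, not the easier but lossier Rademacher--Menshov constant, in order to keep the $q$-dependence at $(1 + q/(q-2))^2$; this requires controlling the (almost-)martingale structure of the $D_k - D_{k-1}$ inside each block rather than merely their $\ell^2$-behaviour.
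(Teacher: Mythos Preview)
Your proposal takes a fundamentally different route from the paper and contains a genuine gap.

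The paper does not use a long/short decomposition at all. Instead it follows Bourgain's averaging trick: after writing $D_k(f)=\sum_j e(\xi_j x)\,f_j*\phi_{2^k}(x)$ with band-limited $f_j$, one introduces a translation parameter $y\in[0,1/100)$, uses the best-constant argument to absorb the error $\|f_j-T_yf_j\|$, and is reduced to estimating
\[
\Bigl\|\tilde{\V}^q\Bigl(\sum_j e(\xi_j y)\,c_k(j)\Bigr)\Bigr\|_{L^2_y[0,1]}
\]
with $c_k(j)=e(\xi_j x)f_j*\phi_{2^k}(x)$. This is handled in one stroke by Lemma~\ref{comb}, which bounds the above by the entropy integral $\int_0^\infty \min\{M_\lambda^{1/2},N^{1/2}M_\lambda^{1/q}\}\,d\lambda$, and then Lemma~\ref{Tech} gives the full factor $(\log N)^2(q/(q-2))^2$ in a single application. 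There is no splitting into two levels, and neither L\'epingle nor Rademacher--Menshov enters.

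The fatal issue in your outline is the assertion that ``only $O(\log N)$ values of $m$ are relevant.'' This is false: the frequencies are $1$-separated, so the multipliers $\widehat{\phi_{2^k}}(\cdot-\xi_j)$ have disjoint supports for all $k\ge 1$, and the variation $D_k f$ is genuinely nontrivial for arbitrarily large $k$ (the supports shrink but never become trivial, and nothing about $N$ caps the range of $k$). Consequently your short-variation sum $\bigl(\sum_m \V^q(D_k f\mid K_m\le k\le K_{m+1})^q\bigr)^{1/q}$ runs over infinitely many blocks, and the per-block bounds you quote, growing like $m$, are not $\ell^q$-summable. A secondary problem is that L\'epingle's inequality applies to martingales, and $\{D_{K_m}f\}_m$ has no martingale structure; the square-function identity $\sum_m\|(D_{K_{m+1}}-D_{K_m})f\|_2^2\lesssim\|f\|_2^2$ alone does not yield a $q$-variation bound with constant $O(q/(q-2))$ unless one goes through the entropy argument anyway---which is exactly what Lemma~\ref{Tech} packages, and it already produces the \emph{squared} factor, not a single factor as you claim for the long piece.
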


We define the (non-homogeneous) $q$-variation of the sequence of functions $\{ D_k (f) \}$
\[ \V^q(D_k(f))(x):= \sup_{k} |D_k(f)| +
\left( \sup_{k_1 < k_2 < \dots < k_M} \sum_m | D_{k_m} f  - D_{k_{m+1}} f |^q \right)^{1/q}(x), \]
where the supremum is taken over finite increasing sequences;
we refer the reader below for a more involved discussion of the $q$-variation norm.

We also include a simplified -- and optimized -- proof of \cite[Theorem 5.1]{N1}; this proof is easily adapted to study the $L^1$ endpoint treated in \cite{D}. We prove the following

\begin{proposition}
Suppose $\Sigma:=\{ \xi_1, \dots, \xi_n\} \subset \RR$ are not necessarily $1$-separated, and define $D_k(f)$ as above. Then, for $q > 2$,
\[ \left\| \V^q(D_k (f)) \right\|_{L^{1,\infty}(\RR)} \lesssim \sqrt{N} \cdot \log^3 N \cdot \left( 1 + \frac{q}{q-2} \right)^2 \|f \|_{L^1(\RR)}.\]
\end{proposition}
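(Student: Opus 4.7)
The plan is to run a Calder\'on--Zygmund argument at level $\alpha$ using the multi-frequency decomposition of \cite{N1}, deducing the weak-$(1,1)$ bound from the $L^2$ estimate of Proposition~\ref{main}. First I would decompose $f = g + b = g + \sum_Q b_Q$ so that the bad atoms $b_Q$ are orthogonal to every character $e(\xi_j x)$, $1 \leq j \leq N$, with the standard estimates $\|g\|_2^2 \lesssim \alpha \|f\|_1$, $\|b_Q\|_1 \lesssim \alpha |Q|$, and $\sum_Q |Q| \lesssim \|f\|_1/\alpha$. The good part is then dispatched by Chebyshev and Proposition~\ref{main}, and the exceptional set $E = \bigcup_Q Q^*$ contributes at most $\lesssim \|f\|_1/\alpha$ to any superlevel set.

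The remaining task, and the crux of the proof, is to estimate $\|\V^q(D_k b) \cdot 1_{E^c}\|_{L^1}$. For a cube $Q$ of sidelength $2^{-s(Q)}$, I would split the range of scales into a coarse regime $k < s(Q)$ and a fine regime $k \geq s(Q)$. In the coarse regime, the multi-frequency cancellation permits a Taylor expansion of $\widehat{\phi_{2^k}}(\xi - \xi_j)$ about $\xi = \xi_j$, producing pointwise kernel bounds with geometric decay in $s(Q) - k$. In the fine regime, the physical-side kernel $\phi_{2^k}$ has width smaller than $\ell(Q)$, so its contribution is concentrated near $Q$ and decays rapidly on $E^c$. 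These scale-by-scale pointwise estimates are then upgraded to a $q$-variational bound via a Rademacher--Menshov-type argument, which is where a factor of $(1 + q/(q-2))^2$ enters.

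The main obstacle is controlling the bad-part sum with the sharp $\sqrt{N}$ loss: a naive sum over the $N$ frequencies costs a factor of $N$, but Cauchy--Schwarz in the frequency index, coupled with the near-$\ell^2$-orthogonality of the kernel contributions at distinct $\xi_j$ (in the spirit of the main estimate of \cite{N1}), recovers the improved $\sqrt{N}$. The remaining logarithmic losses ($\log^3 N$) accumulate in the summation over scales and in the variational upgrade. Assembling these bounds and optimizing the interplay between the good-part and bad-part contributions then yields the claim.
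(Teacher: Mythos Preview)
Your overall Calder\'on--Zygmund scaffolding is exactly right and matches the paper: apply the multi-frequency decomposition of \cite{N1}, handle $g$ by Chebyshev and the $L^2$ bound of Proposition~\ref{main2}, discard $\bigcup_J J^*$, and reduce to a per-atom estimate $\|\V^q(D_k b_J)\|_{L^1((J^*)^c)} \lesssim |J|$. But two aspects of your bad-part analysis diverge from the paper, and one of them is a genuine misattribution.

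First, the paper does \emph{not} run a Rademacher--Menshov argument on the bad part. It simply majorizes $\V^q(D_k b_J) \leq \sum_{k} |D_k b_J|$ pointwise (the trivial $\ell^1$ bound on variation) and then estimates $\sum_k \|D_k b_J\|_{L^1((J^*)^c)}$ scale by scale. The factor $\log^3 N \cdot (1+q/(q-2))^2$ in the final statement comes \emph{entirely} from the good part, via the $L^2$ constant in Proposition~\ref{main2}; the bad part is controlled with no $q$-dependence and no logarithmic loss beyond what is already in the MFCZ. Your claim that the $(1+q/(q-2))^2$ enters through a variational upgrade on $b$ is therefore wrong, and inserting a Rademacher--Menshov step there would only worsen the constants.

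Second, your description of the hard regime (kernel scale $\gtrsim |J|$) as ``Taylor expansion of $\widehat{\phi_{2^k}}(\xi-\xi_j)$ about $\xi=\xi_j$'' is not what the paper does. The paper first discretizes each $D_k$ via a windowed Fourier series, which converts $\|D_k b_J\|_{L^1}$ (after Cauchy--Schwarz against a compactly supported factor) into an $L^2$-norm of a square function $\bigl(\sum_\omega |\langle b_J,\phi_{l,\omega}\rangle|^2 |A|^2\bigr)^{1/2}$. The cancellation $\int b_J e(-\xi_\omega x)\,dx=0$ is then exploited on the \emph{physical} side by replacing $\phi_{|I|}(z)$ with $\rho(z)=\phi_{|I|}(z)-\phi_{|I|}(0)$ and invoking the mean-value estimate $|\rho|\lesssim |I|^{-2}$ on $3J$. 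The crucial step after that is a further splitting $b_J=f_J-g_J$: the $f_J$-piece is handled trivially using $\|f_J\|_1 \lesssim |J|/\sqrt{N}$ (which absorbs the $\sqrt{N}$ from the square function), while the $g_J$-piece is handled by duality together with off-diagonal decay of the kernel $K(\omega,\omega')=\bigl|\int |\rho\chi|^2 e((\xi_{\omega'}-\xi_\omega)z)\,dz\bigr|$. This $F/G$ splitting, rather than a generic ``near-$\ell^2$-orthogonality'' argument, is what produces the sharp $\sqrt{N}$ without $\epsilon$-loss and is the main point of the paper's improvement over \cite{N1}.
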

\begin{remark}
The prefactor $\sqrt{N}$ is sharp, and though suggested in the work of \cite[Theorem 5.1]{N1}, was (narrowly) missed there.
\end{remark}

The techniques used in the proof of the above proposition robust enough to prove useful in studying the behavior of Fourier multipliers with bounded $r$-variation. We briefly discuss our main result in this direction (see \S 4 for greater depth):

We define the (non-homogeneous) $r$-variation norm, the $\V^r$-norm, of a function
\[ \| h \|_{\V^r(\RR)} := \| h \|_{L^\infty(\RR)} + \left( \sup_{\xi_1 < \xi_2 < \dots < \xi_K} \sum_k | h(\xi_k) - h(\xi_{k+1}) |^r \right)^{1/r},\]
where the supremum is over all strictly increasing finite sequences of real numbers.

For $X$ a finite collection of disjoint subintervals of $\RR$, and $\{ m_\omega : \omega \in X\}$ a collection of functions with $\hat{m}_\omega$ supported in $\omega$, and $\| \hat{m}_\omega \|_{\V^r}$ uniformly bounded in $\omega$, we consider operators of the form
\[ \M f:= \left( \sum_{\omega} \hat{m}_\omega \hat{f} \right)^{\vee}.\]

In \cite{CRS}, it was proven that for any $r \geq 2$, $|\frac{1}{q} - \frac{1}{2}| < \frac{1}{r}$, and $\epsilon > 0$, there exist absolute constants $C_{q,r,\epsilon}$ so that
\[ \left\| \M f \right\|_{L^q(\RR)} \leq C_{q,r,\epsilon} | X| ^{|\frac{1}{q} - \frac{1}{2}| + \epsilon} \sup_{\omega} \| \hat{m}_\omega \|_{\V^r(\RR)} \|f\|_{L^q(\RR)}.\]

We prove that the $\epsilon$ is extraneous:
\begin{theorem}\label{rvar}
There exist absolute constants $C_{r,q}$ so that
\[ \left\| \M f \right\|_{L^q(\RR)} \leq C_{q,r} | X| ^{|\frac{1}{q} - \frac{1}{2}|} \sup_{\omega} \| \hat{m}_\omega \|_{\V^r(\RR)} \|f\|_{L^q(\RR)}.\]
\end{theorem}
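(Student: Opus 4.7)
The plan is to exhibit $\M$ as a dyadic sum of indicator-multipliers via a step-function decomposition of each $\hat m_\omega$, and to apply a sharp $\varepsilon$-free indicator-multiplier estimate at each scale; the latter uses the multi-frequency Calder\'on--Zygmund machinery of \cite{N1} together with the optimal entropy input of Lemma \ref{Tech} to avoid the logarithmic losses of \cite{CRS}. By duality ($\M^*$ has the same form with the same $\V^r$-norm on the conjugate symbols), I would restrict attention to $q \geq 2$.

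For each $\omega \in X$, partition $\omega$ into $2^k$ sub-intervals of equal $r$-variation content. Super-additivity of $\V^r$-content under partitioning yields step-function approximations $m_\omega^{(k)}$ of $\hat m_\omega$ with at most $2^k$ pieces and uniform error $\|\hat m_\omega - m_\omega^{(k)}\|_{L^\infty} \lesssim 2^{-k/r}\|\hat m_\omega\|_{\V^r}$. Telescoping, $\hat m_\omega = \sum_{k\geq 0}\Delta_k^\omega$ with $\Delta_k^\omega := m_\omega^{(k+1)} - m_\omega^{(k)}$ a step function on $\omega$ having $O(2^k)$ pieces, each of magnitude $\lesssim 2^{-k/r}\sup_\omega\|\hat m_\omega\|_{\V^r}$. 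At fixed scale $k$ the constant pieces across $\omega \in X$ assemble into a pairwise disjoint family of $\lesssim 2^k|X|$ sub-intervals.

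The key input is the log-free indicator estimate: for any pairwise disjoint family $\{J_i\}_{i=1}^{L}$ of intervals, the operator $T_L f := \sum_i(\hat f\,1_{J_i})^\vee$ satisfies $\|T_L\|_{L^q \to L^q} \lesssim_q L^{|1/q - 1/2|}$ for every $q \in (1,\infty)$. I would prove this by Marcinkiewicz interpolation between the Plancherel bound $\|T_L\|_{L^2 \to L^2} \leq 1$ and the restricted weak $(1,1)$ endpoint $\|T_L 1_E\|_{L^{1,\infty}} \lesssim \sqrt{L}\,|E|$; the endpoint is obtained by adapting the multi-frequency CZ method of \cite{N1} to the single (non-variational) operator $T_L$, and invoking the optimal entropy estimate of Lemma \ref{Tech} to eliminate the $\log$-factor that appears in Proposition 1.3. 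Applied at scale $k$ to the $\lesssim 2^k|X|$-interval step-multiplier with coefficient bound $2^{-k/r}$, this yields
\[ \|\M_k\|_{L^q \to L^q} \lesssim_q 2^{-k/r}(2^k |X|)^{|1/q - 1/2|}\sup_\omega\|\hat m_\omega\|_{\V^r}, \qquad \M_k f := \sum_\omega(\hat f\,\Delta_k^\omega)^\vee. \]
Summing $\M = \sum_k \M_k$ produces a geometric series in $2^{k(|1/q - 1/2| - 1/r)}$, which converges exactly in the hypothesized range $|1/q - 1/2| < 1/r$, with total $\lesssim_{q,r} |X|^{|1/q - 1/2|}\sup_\omega\|\hat m_\omega\|_{\V^r}$, as required.

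The principal obstacle is the $\varepsilon$-free indicator estimate: a direct appeal to Rubio de Francia's square function gives only $L^{|1/q - 1/2| + \varepsilon}$, because Riesz--Thorin interpolation from the $L^{q_0}$ bound with $q_0 < \infty$ loses an exponent that vanishes only as $q_0 \to \infty$. Removing the $\varepsilon$ therefore rests on the restricted weak-type endpoint, and producing that endpoint without a spurious $\log L$ is exactly where the multi-frequency CZ analysis and the sharp metric entropy of Lemma \ref{Tech} carry the weight.
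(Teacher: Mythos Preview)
Your overall architecture matches the paper's: use the \cite{CRS} step-function decomposition of each $\hat m_\omega$ into dyadic layers with $O(2^k)$ pieces of size $O(2^{-k/r})$, reduce to a log-free indicator-multiplier estimate $\|T_L\|_{L^q\to L^q}\lesssim L^{|1/q-1/2|}$, establish that via interpolation between Plancherel and a weak-type $(1,1)$ endpoint with constant $\sqrt{L}$, and sum the geometric series in $k$. So the skeleton is correct and essentially the same as the paper's.

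However, your identification of where the work lies is off. Lemma~\ref{Tech} is irrelevant to Theorem~\ref{rvar}: it is an $L^2$ metric-entropy estimate tailored to the \emph{variational} operator of Proposition~\ref{main}, and the $\log^3 N$ in Proposition~\ref{1-1} comes from the $L^2$ bound on the \emph{good} part of the multi-frequency Calder\'on--Zygmund decomposition (which in turn appeals to Proposition~\ref{main2}). For the single, non-variational indicator multiplier $T_L$, the $L^2$ bound on the good part is just Plancherel with constant $1$, so there is no $\log$ to eliminate and Lemma~\ref{Tech} never enters.

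The actual obstacle --- which you do not address --- is that the symbols $1_\omega$ are rough. The argument of Proposition~\ref{1-1} treats smooth Schwartz multipliers $\phi_\omega$; to run it here the paper first performs a Whitney decomposition of each $\omega$ into dyadic subintervals $u(\omega)$, builds uniformly smooth bumps $\phi_{u(\omega)}$, $A_{u(\omega)}$ at each Whitney scale, and only then carries out the windowed-Fourier discretization and the $b_J$ analysis scale by scale, summing over Whitney scales $j$. Without this extra layer the mean-value and integration-by-parts estimates on $\rho$ that drive the bad-part bound are unavailable. Your proposal, as written, glosses over precisely the step where the paper does its real work.
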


We prove this result by studying the $L^1$-endpoint behavior of simpler multiplier operators (\S 4). Our approach, however, just fails to shed light on the endpoint question raised in \cite{TW}:

\begin{problem}
For $\| \hat{m}_\omega \|_{\V^2(\RR)} \leq 1$ uniformly bounded, what is the best $|X|$-dependent bound, $C_1(|X|)$, in the weak-type inequality
\[ \left\| \M f \right\|_{1,\infty} \leq C_1(|X|) \|f\|_1?\]
\end{problem}
We look forward to addressing this problem in further research.

\bigskip

The structure of the paper is as follows:

Below, we present our improvements to \cite{N1} and \cite{D}

In \S 2, we develop our $L^2$-theory;

In \S 3, we turn to the end-point $L^1$ question; and

In \S 4, we discuss multipliers with bounded $r$-variation.

\subsection{Acknowledgements}
The author would like to thank Christoph Thiele for helpful conversations, and his advisor, Terence Tao, for his support.

\subsection{Notation}
Throughout, we will make use of the exponential notation
\[  e(t ) := e^{2\pi i t}. \]

For an interval $I$, we let $CI$ denote the concentric interval dilated by a factor of $C$:
\[ I = (-l+c, c+l) \Rightarrow CI = (-Cl + c, c + Cl ).\]

In the first sections, with $q >2$ fixed, we also shall assume $N = O_q(1)$ is large -- say $\log N \frac{q}{q-2} \geq 10$; in what follows, the modifications necessary to lift this assumption are straightforward.

We will also make use of the modified Vinogradov notation. We use $X \lesssim Y$, or $Y \gtrsim X$ to denote the estimate $X \leq CY$ for an absolute constant $C$. If we need $C$ to depend on a parameter, we shall indicate this by subscripts, thus for instance $X \lesssim_r Y$ denotes
the estimate $X \leq C_r Y$ for some $C_r$ depending on $r$. We use $X \approx Y$ as
shorthand for $X \lesssim Y \lesssim X$.

\subsection{Preliminaries}
Begin by fixing a band-limited Schwartz function $\phi \in \s(\RR)$, with $\supp \hat{\phi} \subset [-1/2,1/2]$, and let $\phi_t(x) := \frac{1}{t} \phi(x/t)$ denote the usual $L^1$-normalized dilation.

We recall two families of operators which measure the fluctuation of a sequence of vectors:

For a collection of vectors in a Hilbert space $\{c_k\} \subset H$, let $M_\lambda(\{c_k\})$ denote the $H$-$\lambda$-entropy of the $\{c_k\}$, i.e.\ the fewest number of $\lambda$-balls inside of $H$ required to cover the $\{c_k\}$.

We also introduce the homogeneous and non-homogeneous variation operators, respectively defined below:
\[ \aligned
\tilde{\V}^q_H(\{c_k\}) &:= \sup_{M, k_0< k_1< k_2 < \dots<k_M} \left( \sum_m \|c_{k_m} - c_{k_{m-1}}\|_H^q \right)^{1/q},\\
\V^q_H(\{c_k\}) &:= \sup_k \|c_k\|_H + \sup_{M, k_0< k_1< k_2 < \dots<k_M} \left( \sum_m \|c_{k_m} - c_{k_{m-1}}\|_H^q \right)^{1/q}. \endaligned \]
When $H = \C$, we shall omit the subscript.
We note that for any $\lambda > 0$,
\[ \lambda M_\lambda(\{c_k\})^{1/q} \lesssim \tilde{\V}^q_H(\{c_k\}).\]

We will prove the following
\begin{proposition}\label{main}
Suppose that $\Sigma:=\{\xi_1,\dots,\xi_N\} \subset \RR$ are 1-separated frequencies, and
define
\[ \aligned
D_k (f) &:= \sum_{k=1}^N \int \hat{f}(\xi) \widehat{\phi_{2^k}}(\xi-\xi_j) e(\xi x) \ d\xi  \\
&= \sum_{k=1}^N e(\xi_j x) \int \hat{f}(\xi+\xi_j) \widehat{\phi_{2^k}}(\xi) e(\xi x) \ d\xi. \endaligned\]
Then, with $q>2$,
\[ \left\| \V^q(D_k (f) \ \big| k \geq 1 ) \right\|_{L^2_x(\RR)} \lesssim (\log N)^2 \cdot \left( 1 + \frac{q}{q-2} \right)^2 \cdot \|f\|_{L^2_x(\RR)}.\]
\end{proposition}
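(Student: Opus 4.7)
\emph{Proof plan.}
The first thing to record is that the $1$-separation of $\Sigma$, combined with $\supp\hat\phi\subset[-1/2,1/2]$, forces the pieces $\widehat{\phi_{2^k}}(\cdot-\xi_j)$ to have pairwise disjoint Fourier supports for every $k\geq 1$. Consequently $\|D_k f\|_{L^2}\leq \|f\|_{L^2}$ uniformly in $k$, and a Plancherel/telescoping argument gives the square-function bound $\sum_{k\geq 1}\|D_{k+1}f-D_k f\|_{L^2}^{2}\lesssim \|f\|_{L^2}^{2}$, since the Littlewood--Paley-type pieces $\widehat{\phi_{2^{k+1}}}-\widehat{\phi_{2^k}}$ are almost orthogonal around each $\xi_j$.

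Next I would split the $q$-variation into three pieces: the supremum $\sup_k|D_k f|$, the long variation $\tilde\V^q\bigl((D_{2^l}f)_l\bigr)$ of the sequence restricted to dyadic times, and the short variation $V^q_{\mathrm{short}}$ accumulating in $\ell^q$ the fluctuations within the dyadic blocks $[2^l,2^{l+1})$. On a single block the embedding $\ell^2\hookrightarrow\ell^q$ (costless for $q>2$) together with the square-function bound above and Minkowski in $L^2$ gives $\|V^q_{\mathrm{short}}(D_k f)\|_{L^2}\lesssim \|f\|_{L^2}$, with no $N$- or $q$-dependent loss. The maximal contribution is bounded by Bourgain's original $(\log N)^2$ estimate. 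Thus the entire proof reduces to controlling the long variation of the coarse sequence $(D_{2^l}f)_l$.

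For the coarse sequence I would apply the sharpened metric entropy estimate of Lemma~\ref{Tech}. It should produce, for the $1$-separated multi-frequency family at hand, an $L^2$-entropy bound for $\{D_{2^l}f\}$ with the sharp $(\log N)^2$ exponent; a chaining argument then converts that entropy bound into the required $q$-variation estimate, using the elementary inequality $\lambda M_\lambda^{1/q}\lesssim \tilde\V^q$ in reverse along a dyadic chain of scales. The two factors of $(1+q/(q-2))$ arise naturally in this chain --- one from the $\lambda$-integral relating $\tilde\V^q$ to $\sup_\lambda\lambda\,M_\lambda^{1/q}$, and a second from aggregating the dyadic shells across scales --- and combine with the $(\log N)^2$ from the entropy bound to give the claimed estimate.

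The main obstacle, as signalled in the introduction, is the sharp entropy estimate of Lemma~\ref{Tech} with the optimized $(\log N)^2$ exponent. It refines the Bourgain--Nazarov--Oberlin--Thiele argument as follows: after discretizing the frequency side in the spirit of \cite{D}, one applies the multi-frequency Calder\'on--Zygmund decomposition of \cite{N1} and balances an $L^2$ gain against the entropy of admissible sign patterns on the $N$ frequencies, being careful to absorb only $(\log N)^2$ rather than a higher power. Everything else --- the long-short split, the Plancherel square function on dyadic blocks, and the chaining from entropy to $q$-variation --- is essentially bookkeeping once that entropy bound is in hand.
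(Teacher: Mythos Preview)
Your proposal has a genuine gap in the treatment of the long variation, and it mischaracterizes the content of Lemma~\ref{Tech}.

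First, the long/short decomposition does not actually reduce the problem. The index $k$ already parametrizes dyadic scales (we are looking at $\phi_{2^k}$), so passing to the subsequence $(D_{2^l}f)_l$ merely replaces one multi-frequency variation problem by another of the same kind. Your short-variation bound via the square function $\sum_k\|D_{k+1}f-D_kf\|_{L^2}^2\lesssim\|f\|_{L^2}^2$ is fine, but after disposing of it you are left with essentially the original question.

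Second, and more seriously, you have not explained how entropy controls variation. The inequality $\lambda M_\lambda^{1/q}\lesssim\tilde\V^q$ that you cite goes the wrong way: it bounds entropy by variation, not variation by entropy. There is no general ``chaining in reverse'' that turns a bound on the $\ell^2([N])$-entropy $M_\lambda(x)$ of the vector sequence $\big(f_j\ast\phi_{2^k}(x)\big)_j$ into a bound on the scalar variation $\tilde\V^q\big(\sum_j e(\xi_jx)\,f_j\ast\phi_{2^k}(x)\big)$. The paper achieves this via two specific devices you omit: Bourgain's translation/averaging trick (replace $f_j$ by $T_yf_j$, use the band-limitedness to absorb the error into half the best constant, and average in $y$), which introduces an inner $L^2_y[0,1]$-integral, and Lemma~\ref{comb}, which bounds that inner integral pointwise in $x$ by $\int_0^\infty\min\{M_\lambda^{1/2},\,N^{1/2}M_\lambda^{1/q}\}\,d\lambda$. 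Only after this reduction does Lemma~\ref{Tech} apply to the outer $L^2_x$-norm.

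Third, your description of Lemma~\ref{Tech} is incorrect. It is a pure $L^2$ estimate on the entropy integral, and its proof (given in the paper) is an elementary splitting of the $\lambda$-integral at the thresholds $F(x)/C$ and $F(x)$, with the parameter choices $\theta=1/\log N$ and $a=\tfrac{q-2}{2q}\theta$, followed by an appeal to Lemma~\ref{var} (Bourgain's Lemma~3.30). There is no multi-frequency Calder\'on--Zygmund decomposition and no discretization \`a~la~\cite{D} in its proof; those tools enter only in the $L^{1,\infty}$ endpoint argument of \S3, not in the $L^2$ result you are asked to prove.
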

By using a Rademacher-Menshov style argument used in \cite[Lemma 4.32]{B1} or \cite[Theorem 4.3]{N1}, one may lift the frequency-separation hypothesis at the cost of a further logarithm:
\begin{proposition}\label{main2}
For any frequencies $\Sigma:= \{\xi_1,\dots,\xi_N\}$ and any dyadic (frequency) interval $\omega$, let $\xi_\omega$ denote the smallest frequency $\xi_j \in \Sigma \cap \omega$ which lies in $\omega$, and let
\[ \widehat{\phi_\omega}(\xi):= \hat{\phi}\left(\frac{\xi - \xi_\omega}{|\omega|} \right).\]
In this instance, define
\[ D_k (f) := \sum_{|\omega|= 2^{-k} : \ \omega \cap \Sigma \neq \emptyset } \int \hat{f}(\xi) \hat{\phi_\omega}(\xi) e(\xi x) \ d\xi . \]
Then, with $q>2$,
\[ \left\| \V^q(D_k (f) ) \right\|_{L^2_x(\RR)} \lesssim (\log N)^3 \cdot \left( 1 + \frac{q}{q-2} \right)^2 \cdot \|f\|_{L^2_x(\RR)}.\]
\end{proposition}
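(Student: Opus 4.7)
The plan is to reduce Proposition \ref{main2} to Proposition \ref{main} via a Rademacher--Menshov decomposition of scales, following the template of \cite[Lemma 4.32]{B1} and \cite[Theorem 4.3]{N1}; this pays an extra logarithmic factor for the loss of the $1$-separation hypothesis.

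The crucial structural observation is that at any fixed scale $2^{-k}$, the representatives $\{\xi_\omega : |\omega|=2^{-k},\ \omega\cap\Sigma\neq\emptyset\}$ lie in distinct dyadic intervals of length $2^{-k}$ and are therefore automatically $2^{-k}$-separated. After a trivial rescaling, this is a $1$-separated family of at most $N$ elements, and Proposition \ref{main} controls the $q$-variation of the sub-sequence $(D_{k+j}f)_{j\geq 0}$ by $(\log N)^2(1+\tfrac{q}{q-2})^2\|f\|_{L^2}$.

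Next, I would apply the Rademacher--Menshov inequality
\[ \V^q(D_kf)\ \lesssim\ \sum_{J\geq 0}\Bigl(\sum_l\,|D_{l\cdot 2^J}f - D_{(l-1)\cdot 2^J}f|^q\Bigr)^{1/q} \]
to the scale index. Each block difference $D_{l\cdot 2^J}-D_{(l-1)\cdot 2^J}$ is controlled in $L^2$-operator-norm by the rescaled Proposition \ref{main} applied at scale $(l-1)\cdot 2^J$, yielding $(\log N)^2(1+\tfrac{q}{q-2})^2\|f\|_{L^2}$ per block-scale $J$.

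The principal obstacle is to verify that only $O(\log N)$ block-scales $J$ contribute non-trivially, so that the resulting prefactor is $(\log N)^3$ rather than something larger. This requires carefully organizing the scales: collapsing the ``chain'' portions of the dyadic tree generated by $\Sigma$ (on which refinement differences are single-frequency Littlewood--Paley pieces bounded by classical orthogonality) and retaining only the ``branching'' levels, of which there are at most $N-1$. Once this organization is in place, R--M applies to an effectively $O(\log N)$-deep sub-problem, and the remaining estimates reduce directly to Proposition \ref{main}, giving the claimed $(\log N)^3(1+\tfrac{q}{q-2})^2\|f\|_{L^2}$ bound.
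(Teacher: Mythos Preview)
Your approach---reducing to Proposition \ref{main} via the Rademacher--Menshov tree argument of \cite[Lemma 4.32]{B1} and \cite[Theorem 4.3]{N1}, collapsing chain portions and retaining only the $\leq N-1$ branching levels---is precisely what the paper does; indeed the paper gives no further detail than you do, simply citing those same two references for the extra logarithm.

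One correction is needed, however: your ``crucial structural observation'' is false as stated. Representatives $\xi_\omega$ lying in \emph{distinct} dyadic intervals of length $2^{-k}$ are not automatically $2^{-k}$-separated---points in adjacent intervals can be arbitrarily close---so the direct rescaling to a $1$-separated configuration does not work. The standard repair (used, e.g., in the paper's proof of Proposition \ref{1-1}) is to first sparsify the intervals at each scale into $O(1)$ congruence classes modulo a fixed integer; within each class the intervals, and hence their representatives, genuinely are $\gtrsim 2^{-k}$-separated, and your reduction to Proposition \ref{main} then goes through unchanged.
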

For the sake of clarity we have contented ourselves with the above results;
 it should be noted, however, that by arguing as in \cite[\S 4.1-4.2]{N1}, an improvement of \cite[Proposition 4.2]{N1} can be achieved:

If
\[ \Delta_k(f) := \sum_{|\omega|= 2^{-k} : \ \omega \cap \Sigma \neq \emptyset} f * m_\omega, \]
where $\{ m_\omega \}$ are Schwartz functions with
\[ \widehat{m}_\omega \subset \omega, \]
then we have the following

\begin{cor}
For any choice of $N$ frequencies, $\Sigma:=\{\xi_1,\dots,\xi_N\} \subset \RR$, there exists a $t=t(q) > 2$ so that
\[ \left\| \V^q \left( \Delta_k(f) \right) \right\|_{L^2_x(\RR)} \lesssim (\log N)^3 \cdot \left( 1 + \frac{q}{q-2} \right)^2\left( D'_2 + V^t \right) \|f\|_{L^2(\RR)},\]
where $V^t:= \sup_j \V^t \left( \sum_{|\omega|=2^{-k}} \widehat{m}_\omega(\xi_j) \right)$, and where $D'_2 := \sup_{\omega, \xi} |\omega|^2 | \hat{m}_\omega^{(2)}(\xi)|$.
\end{cor}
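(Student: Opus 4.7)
The plan is to proceed as in \cite[\S 4.1-4.2]{N1}, using our improved variational estimate Proposition \ref{main2} as the $L^2$-input in place of the weaker bound appearing there. The central idea is to compare each multiplier $\hat{m}_\omega$ to a rank-one ``model'' piece governed by the bump $\hat{\phi_\omega}$ from Proposition \ref{main2}, plus a smooth remainder controlled by $D'_2$.

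Concretely, for each dyadic $\omega$ with $\omega \cap \Sigma \neq \emptyset$, I would decompose
\[ \hat{m}_\omega(\xi) \;=\; \hat{m}_\omega(\xi_\omega)\,\hat{\phi_\omega}(\xi) \;+\; \hat{r}_\omega(\xi), \]
where a Taylor expansion at $\xi_\omega$ together with the hypothesis $|\omega|^2\,|\hat{m}_\omega^{(2)}| \leq D'_2$ gives $\|\hat{r}_\omega\|_\infty \lesssim D'_2$ with derivative estimates strong enough to integrate by parts twice, so that $r_\omega$ decays rapidly off its natural spatial scale. For the main term, fix $\xi_j \in \Sigma$ and let $\omega_{j,k}$ be the unique dyadic interval of length $2^{-k}$ containing it; by hypothesis, the scalar sequence $c^j_k := \hat{m}_{\omega_{j,k}}(\xi_{\omega_{j,k}})$ has $\V^t$-norm bounded uniformly in $j$ by $V^t$. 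Via a Hölder-type inequality for variations, $\V^{q}(ab) \lesssim \V^{q_1}(a)\,\V^{q_2}(b)$ with $\tfrac{1}{q} = \tfrac{1}{q_1} + \tfrac{1}{q_2}$, applied pointwise in $x$, this variation is transferred onto the model operator of Proposition \ref{main2}, producing the $V^t$ factor at the cost of a slight degradation of the variational exponent — this is where the exponent $t = t(q) > 2$ enters.

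The error contribution $\sum_{|\omega|=2^{-k}} f * r_\omega$ is handled via the near-orthogonality of the $\hat{r}_\omega$ (whose frequency supports lie in the disjoint $\omega$) combined with the pointwise spatial decay of $r_\omega$; this yields a trivial $\V^q$ bound proportional to $D'_2$ with at most a $\log N$ loss from the $O(\log N)$ relevant scales. The principal obstacle is the Hölder-for-variations / summation-by-parts step: one must certify the choice of $t(q)$ and carefully account for logarithmic losses so that the three sources — the Rademacher--Menshov-type upgrade yielding Proposition \ref{main2}, the summation by parts at each $\xi_j$, and the error bookkeeping — combine to no worse than the claimed $(\log N)^3$.
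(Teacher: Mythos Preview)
Your proposal is correct and matches the paper's own treatment: the paper does not supply a detailed proof of this corollary, but simply remarks that it follows ``by arguing as in \cite[\S 4.1--4.2]{N1}'' with the improved Proposition~\ref{main2} substituted for the weaker $L^2$ input used there. Your sketch --- the model-plus-remainder decomposition, the Abel-summation/H\"older-for-variations step introducing $t=t(q)$ and the factor $V^t$, and the Littlewood--Paley handling of the $D'_2$-controlled error --- is precisely the content of that section of \cite{N1}, so the approaches coincide.
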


We also consider the $L^1$-endpoint behavior of the (non-frequency separated) operator $\V^q(D_k (f))$, $q>2$. We prove
\begin{proposition}\label{1-1}
Under the non-frequency separated hypotheses of Proposition \ref{main2}, with $q > 2$
\[ \| \V^q(D_k (f)) \|_{L^{1,\infty}(\RR)} \lesssim \sqrt{N} \cdot \log^3 N \cdot \left( 1 + \frac{q}{q-2} \right)^2 \|f \|_{L^1(\RR)}.\]
\end{proposition}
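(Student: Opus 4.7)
The plan is to run the Multi-Frequency Calder\'on--Zygmund decomposition of \cite{N1} at a carefully chosen height and control the good part via Proposition \ref{main2}. Normalize $\|f\|_1 = 1$ and fix $\alpha > 0$; it suffices to prove
\[ \left|\{x : \V^q(D_k f)(x) > \alpha\}\right| \lesssim \sqrt{N} (\log N)^3 \left(1 + \tfrac{q}{q-2}\right)^2 / \alpha . \]
First, apply the multi-frequency CZ decomposition at height $\lambda := \alpha/\sqrt{N}$ with respect to $\Sigma$, producing $f = g + \sum_Q b_Q$ where the intervals $\{Q\}$ are pairwise disjoint with $\sum_Q |Q| \lesssim \lambda^{-1} = \sqrt{N}/\alpha$; each $b_Q$ is supported on $Q$, satisfies $\|b_Q\|_1 \lesssim \lambda|Q|$, and is orthogonal to every character $e(\xi_j \cdot)$ with $\xi_j \in \Sigma$; and the good function $g$ satisfies $\|g\|_2^2 \lesssim N\lambda\|f\|_1 = \sqrt{N}\alpha$.

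For the good part, Proposition \ref{main2} and Chebyshev's inequality at threshold $\alpha/2$ give
\[ \left|\{\V^q(D_k g) > \alpha/2\}\right| \lesssim \frac{(\log N)^{6}(1+q/(q-2))^4 \|g\|_2^2}{\alpha^2} \lesssim \frac{\sqrt{N} (\log N)^6 (1+q/(q-2))^4}{\alpha}, \]
which already has the correct $\sqrt{N}/\alpha$ scaling; a more careful accounting of logarithms in the CZ step (or using Proposition \ref{main} after an auxiliary $1$-separation step for $\Sigma$) reduces the log-power to the claimed $(\log N)^3$.

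For the bad part, set the exceptional set $E := \bigcup_Q 100Q$, so $|E| \lesssim \sqrt{N}/\alpha$, and reduce to showing $\int_{E^c} \V^q(D_k b)(x)\, dx \lesssim \sqrt{N}(\log N)^3 (1+q/(q-2))^2 \|f\|_1$. For each $Q$ with center $y_Q$ and each scale $k$, the vanishing moments of $b_Q$ against the $\lesssim N$ characters $e(\xi_\omega \cdot)$ contributing at scale $2^{-k}$ allow us to replace the kernel $\phi_{2^k}(x-y)$ by the Schwartz difference $\phi_{2^k}(x-y) - \phi_{2^k}(x - y_Q)$, which decays in $|Q|/\mathrm{dist}(x,Q)$ and in a Schwartz fashion in $|x-y_Q|/2^k$. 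Summing over the characters and applying Cauchy--Schwarz produces the single-scale bound $\|D_k b\|_{L^1(E^c)} \lesssim \sqrt{N}\|f\|_1$; the $\sqrt{N}$ prefactor in the proposition is precisely this Cauchy--Schwarz gain, whereas the naive $L^1$-sum over the $N$ characters would yield the inferior bound $N$ narrowly missed in \cite[Theorem 5.1]{N1}.

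The main obstacle is upgrading these single-scale $L^1(E^c)$ estimates to a genuine $\V^q$ bound without sacrificing the $\sqrt{N}$ gain. I would split scales relative to $|Q|$: on scales $2^k \gtrsim |Q|$, the standard CZ smoothing yields Schwartz tails amenable to direct telescoping; on scales $2^k \ll |Q|$, I would reinterpret the contribution as the variation of an auxiliary object to which Proposition \ref{main2} can be re-applied. A Rademacher--Menshov bookkeeping as in \cite[Lemma 4.32]{B1} and \cite[Theorem 4.3]{N1} then converts the scale-by-scale estimates into a $\V^q$ bound at the cost of at most one extra factor of $\log N$, matching the stated $(\log N)^3$.
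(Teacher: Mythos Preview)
Your good-part treatment is essentially correct in spirit (though you have mis-recorded the constants in the Multi-Frequency Calder\'on--Zygmund theorem: at height $\lambda$ one gets $\sum_J|J|\lesssim \sqrt{N}\|f\|_1/\lambda$ and $\|g\|_2^2\lesssim \sqrt{N}\lambda\|f\|_1$, so the correct choice is $\lambda=\alpha$, not $\alpha/\sqrt{N}$; with your height the exceptional set has measure $N/\alpha$, which is too large). The real gap is in the bad part.

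Your single-scale estimate $\|D_k b\|_{L^1(E^c)}\lesssim \sqrt{N}\|f\|_1$ carries no decay in $k$, so it cannot be summed; and your proposed repairs do not work. ``Telescoping'' the Schwartz tails on large scales does not produce summability, and ``re-applying Proposition \ref{main2}'' on small scales is not available, since that is an $L^2$ estimate and $b$ is only an $L^1$ object. Rademacher--Menshov bookkeeping trades $\log M$ for control over $M$ scales, but here the number of scales is unbounded. More fundamentally, even granting a per-interval bound $\|\V^q(D_k b_Q)\|_{L^1(E^c)}\lesssim \sqrt{N}\|b_Q\|_1$, summing over $Q$ yields $\sqrt{N}\sum_Q\|b_Q\|_1\lesssim \sqrt{N}\cdot\alpha\sum_Q|Q|\lesssim N$, which loses a full factor of $\sqrt{N}$. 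To hit $\sqrt{N}$ you need the per-interval bad estimate to be $O(|Q|)$ with \emph{no} $N$-dependence whatsoever, and this is where the argument requires a genuinely different mechanism from the one you sketch.

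The paper's route is as follows. First, one simply majorizes $\V^q(D_k b_J)\le \sum_k |D_k b_J|$; this crude step is harmless because one then proves geometric decay in $k$. For scales $2^k\ge |J|$, each $D_k$ is discretized by a windowed Fourier series (as in \cite{D}), which spatially localizes to intervals of length $\approx 2^k$; Cauchy--Schwarz on each window converts the $L^1$ norm into an $L^2$ quantity. At this point one splits $b_J=f_J-g_J$ and invokes the two fine properties of the multi-frequency decomposition you never use: $\|f_J\|_1\lesssim \lambda|J|/\sqrt{N}$ (which exactly cancels the $\sqrt{N}$ coming from the $N$ characters in the ``$F$'' piece) and $\|g_J\|_2\lesssim |J|^{1/2}\lambda$ (which, via a duality argument and an almost-orthogonality kernel bound, handles the ``$G$'' piece). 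The upshot is $\|D_k b_J\|_{L^1}\lesssim (2^k/|J|)^{-1/4}$, summable in $k$. Scales $2^k<|J|$ are disposed of by brute Schwartz decay with $M=\log N$ derivatives. The Cauchy--Schwarz over characters that you identify as the source of the $\sqrt{N}$ is in fact used only \emph{inside} the $L^2$ step and is then neutralized by the $1/\sqrt{N}$ gain built into the decomposition; the $\sqrt{N}$ in the final statement comes entirely from $\sum_J|J|\lesssim\sqrt{N}/\lambda$ and from $\|g\|_2^2\lesssim\sqrt{N}\lambda$.
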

Our proof relies on the Multi-Frequency Calder\'{o}n-Zygmund decomposition of \cite{N1}, and indeed is similar to that of \cite[Theorem 5.1]{N1}.
Our modifications were motivated by the outstanding question of \cite{D} concerning the $L^1$-endpoint behavior of the operator
\[ \sup_{k \geq 5} |\Delta_k f|. \]
Indeed, an easy adaptation of our methods yields
that for $1<p\leq 2$
\[ \left\| \sup_{k \geq 5} |\Delta_k f| \right\|_{L^p(\RR)} \lesssim_t N ^{1/p - 1/2} \log^2 N \| f\|_{L^p(\RR)},\]
where the implicit constants depend on a certain $V^t$-quantity in the above notation. This result represents a modest improvement over the current bound of (essentially) $\lesssim_\epsilon N^{1/p -1/2 + \epsilon}$ for $\epsilon > 0$. We refer the reader to \cite[Theorem 1.5]{D} for the precise statement.

\section{The $L^2$ Estimate}
In this section, we work under the frequency-separated hypothesis. The modifications to pass to the general case can be found in \cite[Theorem 4.3]{N1}.
\subsection{Preliminaries}
We shall make use of the following variational results which appear in \cite[\S 3]{B1}.

\begin{lemma}[Lemma 3.30 of \cite{B1}]\label{var}
With $\textbf{f}:=(f_1,f_2,\dots,f_N)$ an $N$-tuple of functions, define pointwise
\[ M_\lambda(x):= M_\lambda( \textbf{f}*\phi_t(x) |t> 0).\]
Then, with $C_\phi:= \| x \phi'(x) \|_1$, and $r>2$, we have
\[
\left\| \sup_{\lambda > 0} \lambda M_\lambda^{1/r}(x) \right\|_{L^2_x(\RR)} \leq C_\phi \frac{1}{r-2} \|\textbf{f} \, \|_H =  C_\phi \frac{1}{r-2} \left\|\left(\sum_{j=1}^N |f_j|^2 \right)^{1/2} \right\|_{L^2_x(\RR)}. \]
\end{lemma}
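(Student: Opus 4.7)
The plan is to reduce the entropy estimate to a Littlewood--Paley square function bound by exploiting the smoothness of the family $t\mapsto \textbf{f}*\phi_t(x)$. Define the auxiliary function
\[ \psi(x) := -\phi(x) - x\phi'(x) = -\frac{d}{dx}\bigl(x\phi(x)\bigr), \]
so that $\|\psi\|_1 \leq C_\phi + \|\phi\|_1 \lesssim C_\phi$, the function $\psi$ has mean zero, and $t\,\partial_t \phi_t(x) = \psi_t(x)$. Since $\phi$ is band-limited, so is $\psi$, and its Fourier transform vanishes to first order at the origin. The fundamental theorem of calculus then yields, for any $s<t$,
\[ \textbf{f}*\phi_t(x) - \textbf{f}*\phi_s(x) = \int_s^t (\textbf{f}*\psi_u)(x)\,\frac{du}{u}. \]

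The natural object to control is the vector-valued Littlewood--Paley square function
\[ S\textbf{f}(x) := \left( \int_0^\infty \|\textbf{f}*\psi_u(x)\|_H^2 \,\frac{du}{u} \right)^{1/2}. \]
By Plancherel applied coordinate-wise and the bound $\int_0^\infty |\hat\psi(u\xi)|^2\, du/u \lesssim C_\phi^2$ uniformly in $\xi$, one gets $\|S\textbf{f}\|_{L^2_x(\RR)} \lesssim C_\phi \|\textbf{f}\|_H$.

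The heart of the argument is to establish the pointwise bound
\[ \sup_{\lambda>0} \lambda\, M_\lambda\bigl(\textbf{f}*\phi_t(x)\,\big|\, t>0\bigr)^{1/r} \;\lesssim\; \frac{1}{r-2}\, S\textbf{f}(x), \qquad r>2, \]
which, combined with the previous step, yields the stated $L^2$-estimate. To prove it, I would first discretize to dyadic times $t = 2^k$: applying Cauchy--Schwarz to the integral identity above on each dyadic ring $[2^k, 2^{k+1}]$ controls the within-ring oscillation by the dyadic energy
\[ A_k(x) := \left(\int_{2^k}^{2^{k+1}} \|\textbf{f}*\psi_u(x)\|_H^2 \,\frac{du}{u}\right)^{1/2}, \qquad \sum_k A_k(x)^2 = S\textbf{f}(x)^2, \]
so that the continuous $M_\lambda$ differs from its dyadic counterpart by at most the number of rings with $A_k \geq \lambda$. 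For the resulting dyadic sequence, one bounds the $\lambda$-entropy by a Lepingle-type jump count: any $\lambda$-separated subchain contributes $\lambda^2$ per step to $\sum_k A_k^2$, giving $\lambda M_\lambda^{1/2} \lesssim S\textbf{f}(x)$ at fixed $\lambda$. Passing to $\sup_\lambda$ with the $1/r$ exponent is standard once one organizes the estimate dyadically in $\lambda$ and sums a geometric series in $2^{-(r-2)j}$; this is precisely where the factor $(r-2)^{-1}$ arises.

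\textbf{Main obstacle.} The delicate point is the $(r-2)^{-1}$ blow-up, which reflects that the homogeneous $V^2$-variation of the continuous family is typically infinite (as it is for Brownian motion). The exponent $r>2$ is needed to sum the dyadic-$\lambda$ contributions, and care is required to arrange the summation so that only one factor of $(r-2)^{-1}$ appears (and not a higher power). The rest is routine Plancherel and pointwise estimation.
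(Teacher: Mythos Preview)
First, a clerical note: the paper does not supply its own proof of this lemma; it is quoted from Bourgain \cite[Lemma~3.30]{B1} and used as a black box in the proof of Lemma~\ref{Tech}. So there is no in-paper argument to compare against directly, only Bourgain's original.

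On your sketch: the scaffolding is right --- the identity $t\,\partial_t\phi_t=\psi_t$, the square function $S\textbf{f}$, and the Plancherel bound $\|S\textbf{f}\|_2\lesssim C_\phi\|\textbf{f}\|_H$ are exactly the ingredients Bourgain uses. The genuine gap is the asserted \emph{pointwise} inequality
\[
\sup_{\lambda>0}\lambda\,M_\lambda(x)^{1/r}\ \lesssim\ \frac{1}{r-2}\,S\textbf{f}(x).
\]
This is false already for scalar paths. Take $c:(0,\infty)\to\RR$ with $u\,c'(u)=K^{-1/2}\,1_{[1,e^K]}(u)$; then
\[
\Bigl(\int_0^\infty|u\,c'(u)|^2\,\frac{du}{u}\Bigr)^{1/2}=1,\qquad c_{e^K}-c_1=K^{1/2},
\]
so $\sup_\lambda\lambda M_\lambda^{1/r}\geq K^{1/2}$ while the right-hand side is $O_r(1)$. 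Your jump-counting step (``any $\lambda$-separated subchain contributes $\lambda^2$ per step to $\sum_k A_k^2$'') is precisely what breaks: a single $\lambda$-jump may span arbitrarily many dyadic rings, and Cauchy--Schwarz across those rings only gives $\lambda^2\le(\text{number of rings spanned})\cdot\sum_k A_k^2$, with no control on the first factor. Since your argument uses nothing about $\textbf{f}*\phi_t$ beyond the integral identity, it would, if valid, prove the inequality for every path with $u\,c'(u)\in L^2(du/u)$ --- which the example rules out.

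The repair is that the $L^2_x$-norm has to enter \emph{before} one closes up the sum over scales. In Bourgain's argument one bounds, for each fixed jump height, an $L^2_x$-quantity built from the increments $\textbf{f}*\phi_{2^k}-\textbf{f}*\phi_{2^{k-1}}$, exploiting the Fourier-side orthogonality of these Littlewood--Paley pieces (Plancherel applied to the individual increments, not merely to $S\textbf{f}$ in aggregate), and only afterwards sums to produce the $(r-2)^{-1}$. The square function is the right object, but it must be deployed inside the $L^2_x$-norm rather than as a pointwise majorant.
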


\begin{lemma}[Lemma 3.33 of \cite{B1}]
Maintaining the notation of the previous lemma, with $N>0$ we have the bound
\[ \left\|\int_0^\infty \min\left\{ M_\lambda^{1/2}(x), N^{1/2} \right\} \ d\lambda \right\|_{L^2_x(\RR)} \lesssim C_\phi (\log N)^2 \left\|\left(\sum_{j=1}^N |f_j|^2 \right)^{1/2} \right\|_{L^2_x(\RR)}.\]
\end{lemma}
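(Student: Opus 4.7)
The plan is to invoke Lemma \ref{var} pointwise in $\lambda$ with a well-chosen exponent $r>2$, integrate the resulting envelope in $\lambda$ by hand, and finally optimize $r$. Both powers of $\log N$ will come from the $(r-2)^{-1}$ blowup in Lemma \ref{var} being compounded by a second $(r-2)^{-1}$ factor from integrating the tail; one just needs to choose $r$ close enough to $2$ to inflate these, yet not so close that the competing factor $N^{1/2-1/r}$ also blows up.

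To execute this, for any $r>2$ set $G(x):=\sup_{\lambda>0}\lambda M_\lambda^{1/r}(x)$. Lemma \ref{var} gives $\|G\|_{L^2_x} \le C_\phi (r-2)^{-1}\|\textbf{f}\|_H$, and by definition $M_\lambda^{1/r}(x)\le G(x)/\lambda$ pointwise, hence $M_\lambda^{1/2}(x)\le (G(x)/\lambda)^{r/2}$. The two quantities inside $\min\{(G(x)/\lambda)^{r/2},\,N^{1/2}\}$ coincide at $\lambda_*(x):=G(x)N^{-1/r}$. Splitting the $\lambda$-integral at this breakpoint and using $r>2$ to integrate the tail,
\[ \int_0^\infty \min\{(G(x)/\lambda)^{r/2},N^{1/2}\}\,d\lambda \;=\; \lambda_*(x) N^{1/2} \,+\, G(x)^{r/2}\!\!\int_{\lambda_*(x)}^{\infty}\!\!\lambda^{-r/2}\,d\lambda \;=\; \frac{r}{r-2}\,G(x)\,N^{1/2-1/r}. \]
Taking the $L^2_x$-norm and inserting the bound on $\|G\|_{L^2_x}$ yields
\[ \left\| \int_0^\infty \min\{M_\lambda^{1/2},N^{1/2}\}\,d\lambda \right\|_{L^2_x} \;\lesssim\; \frac{r}{(r-2)^2}\,N^{1/2-1/r}\,C_\phi\|\textbf{f}\|_H. \]

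To finish I would choose $r := 2 + 1/\log N$. Then $N^{1/2-1/r} = N^{(r-2)/(2r)} \le e^{1/4} = O(1)$, while $r/(r-2)\approx 2\log N$ and $(r-2)^{-1}=\log N$, producing a total constant $\lesssim (\log N)^2 C_\phi$, as required. The main obstacle is essentially just the calibration step: one must recognize that after applying Lemma \ref{var} pointwise, what remains is a clean scalar integration followed by a one-parameter optimization over $r$ which simultaneously balances the two logarithmic losses against the polynomial $N^{1/2-1/r}$ factor.
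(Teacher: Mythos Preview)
Your argument is correct. Note, however, that the paper does not actually supply a proof of this lemma: it is simply quoted as Lemma~3.33 of \cite{B1}, so there is no ``paper's own proof'' to compare against. What the paper \emph{does} prove is the next lemma (Lemma~\ref{Tech}), which handles the more general integrand $\min\{M_\lambda^{1/2}, N^{1/2}M_\lambda^{1/q}\}$; there the authors introduce $\theta=1/\log N$ and split the $\lambda$-integral at a cutoff $F(x)/C$ with $C=N^{q/(q-2)}$, then appeal to Lemma~\ref{var} with a carefully chosen exponent $t>2$ satisfying $1/(t-2)\lesssim \log N\cdot q/(q-2)$. Your proof is precisely the $q=\infty$ specialization of that scheme: your choice $r=2+1/\log N$ plays the same role as their $t$, your breakpoint $\lambda_*(x)=G(x)N^{-1/r}$ corresponds to their $F(x)/C$, and the two factors of $\log N$ arise in both arguments from the same pair of sources (the $(r-2)^{-1}$ in Lemma~\ref{var} and the $(r-2)^{-1}$ from integrating the tail or the $1/a$ prefactor). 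So while there is no proof in the paper to match, your approach is entirely in line with --- indeed a clean simplification of --- the method the paper uses for the harder companion result.
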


To the best of our knowledge, the following technical result has yet to appear in print.

\begin{lemma}\label{Tech}
With the above notation,
\[
\left\| \int_0^\infty \min\left\{ M_\lambda(x)^{1/2}, N^{1/2} M_\lambda(x)^{1/q} \right\} \ d\lambda \right\|_{L^2_x(\RR)}
\lesssim \left( \log N \frac{q}{q-2} \right)^2 \left\| \left(\sum_j |f_j|^2 \right)^{1/2} \right\|_{L^2_x(\RR)}. \]
\end{lemma}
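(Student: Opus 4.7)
The plan is to split the integrand based on where the two quantities in the minimum balance. Since $M_\lambda^{1/2} \leq N^{1/2} M_\lambda^{1/q}$ precisely when $M_\lambda \leq N' := N^{q/(q-2)}$, I decompose
\[
\int_0^\infty \min\{M_\lambda^{1/2}, N^{1/2} M_\lambda^{1/q}\} \, d\lambda = I_1(x) + I_2(x),
\]
where $I_1$ and $I_2$ are the integrals over $\{\lambda : M_\lambda(x) \leq N'\}$ and its complement, respectively.

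For $I_1$, the integrand $M_\lambda^{1/2}$ coincides on its domain with $\min\{M_\lambda^{1/2}, (N')^{1/2}\}$, so $I_1(x) \leq \int_0^\infty \min\{M_\lambda^{1/2}, (N')^{1/2}\} \, d\lambda$. Applying Lemma 3.33 with $N'$ in place of $N$ then gives
\[
\|I_1\|_{L^2_x} \lesssim (\log N')^2 \|\textbf{f}\|_H = \left(\frac{q}{q-2}\right)^2 (\log N)^2 \|\textbf{f}\|_H,
\]
which already matches the target bound.

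For $I_2$ I will exploit the strong constraint $M_\lambda > N'$ via a dyadic decomposition. Since $M_\lambda$ is nonincreasing in $\lambda$, each set $\Lambda_k := \{\lambda : 2^k \leq M_\lambda < 2^{k+1}\}$ is an interval, and Lemma 3.30 applied with any $r \in (2, q)$ yields $|\Lambda_k| \lesssim F_r(x) \cdot 2^{-k/r}$, where $F_r(x) := \sup_{\lambda > 0} \lambda M_\lambda^{1/r}(x)$ satisfies $\|F_r\|_{L^2} \lesssim (r-2)^{-1} \|\textbf{f}\|_H$. Summing over $k \geq k_0 := \log_2 N'$ produces a geometric series in $2^{k(1/q - 1/r)}$, yielding
\[
I_2(x) \lesssim \frac{qr}{q-r} \cdot N^{1/2} \cdot 2^{k_0(1/q - 1/r)} \cdot F_r(x) = \frac{qr}{q-r} \cdot N^{q(r-2)/(2r(q-2))} \cdot F_r(x).
\]
Choosing $r - 2 \approx (q-2)/(q \log N)$ balances $N^{q(r-2)/(2r(q-2))} = O(1)$ against $\|F_r\|_{L^2} \lesssim \frac{q \log N}{q-2} \|\textbf{f}\|_H$, and (using $r$ close to $2$ to control $qr/(q-r) \lesssim q/(q-2)$) delivers $\|I_2\|_{L^2} \lesssim \left(\frac{q}{q-2}\right)^2 \log N \cdot \|\textbf{f}\|_H$, which is dominated by the bound on $I_1$ under the running assumption $\log N \cdot q/(q-2) \geq 10$.

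The principal obstacle is the balancing in the exponent $r$ for the estimate of $I_2$: taking $r$ too close to $2$ blows up the Bourgain-type constant $(r-2)^{-1}$, while $r$ too far from $2$ lets the power $N^{q(r-2)/(2r(q-2))}$ blow up. The quantitative balance described above, together with the clean reduction of $I_1$ to Lemma 3.33 at the inflated scale $N'$, is what produces the claimed $\bigl(\log N \cdot q/(q-2)\bigr)^2$ dependence.
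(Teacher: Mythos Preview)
Your argument is correct but follows a genuinely different path from the paper. The paper never invokes Lemma~3.33; instead it uses the pointwise interpolation $\min\{a,b\}\le a^{1-\theta}b^{\theta}$ with $\theta=1/\log N$ to replace the integrand by $N^{\theta/2}M_\lambda^{(1-\theta)/2+\theta/q}$, then splits the $\lambda$-integral at the \emph{data-dependent} thresholds $F(x)/C$ and $F(x)$ (with $F$ the vector-valued maximal function and $C=N^{q/(q-2)}$), and closes each piece using only Lemma~3.30 with a carefully tuned exponent $t>2$. Your decomposition by the \emph{size} of $M_\lambda$ is conceptually cleaner: the dominant piece $I_1$ is reduced wholesale to Lemma~3.33 at the inflated parameter $N'=N^{q/(q-2)}$, and only the small residual $I_2$ requires the balancing of $r$ against the $(r-2)^{-1}$ blow-up in Lemma~3.30. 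Both routes produce the same $\bigl(\log N\cdot\tfrac{q}{q-2}\bigr)^2$ constant because the paper's argument is in effect re-deriving the needed instance of Lemma~3.33 by hand rather than citing it. One point you should make explicit: your application of Lemma~3.33 with $N'$ in place of $N$ is legitimate because Bourgain's proof of that lemma uses $N$ only as the cap in the minimum, not as the dimension of the Hilbert space carrying the $f_j$.
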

\begin{proof}
Let
\[ F(x):= K_\phi \cdot (\sum_{j=1}^N |\M(f_j)|^2)^{1/2}\]
denote an appropriate amplification of the vector-valued maximal function, where $K_\phi$ depends only on the least radially-decreasing majorant of the Schwartz function $\phi$.

Set
\[ \theta := \frac{1}{\log N}, \ a := \frac{q-2}{2q} \, \theta, \ C := N^{\frac{\theta}{2a}} = N^{\frac{q}{q-2}},\]
and majorize
\[ \aligned
&\int_0^\infty \min\left\{ M_\lambda(x)^{1/2}, N^{1/2} M_\lambda(x)^{1/q} \right\} \ d\lambda \\
& \qquad \qquad \leq
N^{\theta/2} \int_0^{F(x)/C} \lambda^{1-a} M_\lambda(x)^{\frac{1-\theta}{2}+\frac{\theta}{q}} \ \frac{d\lambda}{\lambda^{1-a}} +
N^{\theta/2} \int_{F(x)/C}^{F(x)} \lambda M_\lambda(x)^{\frac{1-\theta}{2}+\frac{\theta}{q}} \ \frac{d\lambda}{\lambda} \\
& \qquad \qquad =: I_1(x) + I_2(x). \endaligned\]
We begin with $I_1(x)$:
\[ \aligned
I_1(x) &\leq
N^{\theta/2} \cdot \left( \sup_{\lambda > 0} \lambda^{1-a} M_\lambda(x)^{\left( \frac{2q}{q-q\theta+2\theta} \right)^{-1}} \right) \cdot
\int_0^{F(x)/C} \ \frac{d\lambda}{\lambda^{1-a}} \\
&\leq N^{\theta/2} \cdot \left( \sup_{\lambda > 0} \lambda^{1-a} M_\lambda(x)^{\left( \frac{2q}{q-q\theta+2\theta} \right)^{-1}} \right) \cdot \frac{1}{a} \left(F(x)/C \right)^a \\
&= \frac{1}{a} \left( \sup_{\lambda > 0} \lambda^{1-a} M_\lambda(x)^{\left( \frac{2q}{q-q\theta+2\theta} \right)^{-1}} \right) \cdot (F(x))^{a} \\
&\leq \frac{1}{a} \left( \sup_{\lambda > 0} \lambda M_\lambda(x)^{\left( \frac{2q(1-a)}{q-q\theta+2\theta} \right)^{-1}} \right) + F(x), \endaligned\]
where we used Young's inequality with exponents $(\frac{1}{a},\frac{1}{1-a})$ in the final line.
With
\[ \aligned
t&:= \frac{2q(1-a)}{q-q\theta+2\theta} = \frac{2q \log N - (q-2)}{q \log N - (q-2)} > 2, \\
\frac{1}{t-2} &= \frac{q-q\theta+2\theta}{2q\theta - 2qa - 4\theta} = \frac{q \log N  - (q-2)}{2(q -2 -\frac{q-2}{2})} \leq \log N \frac{q}{q-2}
\endaligned
\]
and Lemma \ref{var} in mind,
we take $L^2_x$-norms and estimate
\[ \aligned
\| I_1\|_2 &\lesssim
\frac{1}{a} \left( \log N \frac{q}{q-2} \right) \left(\sum_j \|f_j\|_{L^2_x}^2 \right)^{1/2} +
\frac{1}{a} \left(\sum_j \|f_j\|_{L^2_x}^2 \right)^{1/2} \\
&\lesssim \left( \log N \frac{q}{q-2} \right)^2 \left\| \left(\sum_j |f_j|^2 \right)^{1/2} \right\|_{L^2_x}. \endaligned\]

We now turn to the similar, but slightly simpler, second component:
\[ \aligned
I_2(x) &\leq N^{\theta/2} \left( \sup_{\lambda > 0} \lambda M_\lambda(x)^{\left( \frac{2q}{q-q\theta+2\theta} \right)^{-1}} \right) \log C \\
&\lesssim \frac{\theta}{2a} \log N \left( \sup_{\lambda > 0} \lambda M_\lambda(x)^{\left( \frac{2q}{q-q\theta+2\theta} \right)^{-1}} \right) \\
&= \frac{q}{q-2} \log N \left( \sup_{\lambda > 0} \lambda M_\lambda(x)^{\left( \frac{2q}{q-q\theta+2\theta} \right)^{-1}} \right). \endaligned\]
The argument is concluded as above, this time with
\[ \aligned
t&:= \frac{2q}{q-q\theta+2\theta} = \frac{2q \log N}{\log N - (q-2)} > 2, \\
\frac{1}{t-2} &= \frac{q-q\theta+2\theta}{2q\theta -4\theta} =
\frac{q\log N-(q-2)}{2(q-2)} \leq \log N \frac{q}{q-2}; \endaligned\]
we have
\[ \| I_2 \|_2 \lesssim
\left( \log N \frac{q}{q-2} \right)^2 \left\| \left( \sum_j |f_j|^2 \right)^{1/2} \right\|_{L^2_x}. \]
\end{proof}

The previous lemma will be used in conjunction with the lemma below to yield Proposition \ref{main}.

\begin{lemma}\label{comb}
For a sequence
\[ \{ c_k(1),\dots, c_k(N) \}_{k\geq 1} \subset \RR^N,\]
let $M_\lambda = M_\lambda(\{c_k\})$ denote the $l^2([N])$-entropy of the collection, where $l^2([N])$ is the usual $N$-dimensional Hilbert space.
Suppose that $\{\xi_1,\dots,\xi_N\}$ are $1$-separated frequencies.

Then, for $q > 2$,
\[ \aligned
&\left\| \V^q \left( \sum_{j=1}^N c_k(j)e(\xi_j y) \ \big| {k\geq 1} \right) \right\|_{L^2_y[0,1]} \\
& \qquad \qquad \lesssim \int_0^\infty \min \left\{ M_\lambda^{1/2}, N^{1/2} M_\lambda^{1/q} \right\} \ d\lambda +
\int_0^\infty \min\left\{ M_\lambda^{1/2}, N^{1/2} \right\} \ d\lambda. \endaligned \]
\end{lemma}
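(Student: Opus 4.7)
The plan is a dyadic-chaining argument that exploits the two features of $1$-separated frequencies: the near-orthogonality of $\{e(\xi_j y)\}_j$ on $[0,1]$ gives $\|\sum_j c(j)e(\xi_j y)\|_{L^2_y[0,1]} \lesssim \|c\|_{\ell^2([N])}$ (by Montgomery--Vaughan), while the triangle inequality followed by Cauchy--Schwarz gives the pointwise estimate $\|\sum_j c(j)e(\xi_j y)\|_{L^\infty_y} \leq \sqrt{N}\,\|c\|_{\ell^2([N])}$. The factors $M_\lambda^{1/2}$ and $\sqrt{N}\cdot M_\lambda^{1/q}$ (or $\sqrt{N}$) in the integrals on the right arise from these two estimates respectively, and the $\min$'s from taking the better bound at each dyadic scale.

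I would split $\V^q = \sup_k|\cdot| + \widetilde\V^q(\cdot)$ and treat the two pieces independently: the supremum piece will give the second integral $\int\min\{M_\lambda^{1/2},\sqrt{N}\}\,d\lambda$, and the pure-variation piece the first. Set $\lambda_n := 2^{-n}$, pick a minimal $\lambda_n$-net $\mathcal{N}_n$ of $\{c_k\}\subset\ell^2([N])$ of cardinality $M_{\lambda_n}$, let $c_k^{(n)}\in\mathcal{N}_n$ be the projection of $c_k$, and define $F_k^{(n)}(y) := \sum_j c_k^{(n)}(j)e(\xi_j y)$ and $\delta_k^{(n)} := F_k^{(n+1)} - F_k^{(n)}$. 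Then $\{\delta_k^{(n)}\}_k$ assumes at most $\lesssim M_{\lambda_n}$ distinct values as $k$ varies, each with $L^2_y$-norm $\lesssim \lambda_n$ and $L^\infty_y$-norm $\lesssim \sqrt{N}\lambda_n$. By sub-additivity of $\V^q$ applied to the telescope $F_k = F_k^{(0)} + \sum_n \delta_k^{(n)}$, the problem reduces to bounding $\|\V^q(\delta_k^{(n)})\|_{L^2_y[0,1]}$ at each level, summing in $n$, and recognizing the sum as a Riemann discretization of the claimed integrals.

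At each level there are two competing bounds. On the $L^2$ side, Bessel against the finitely many distinct values gives $\|\sup_k|\delta_k^{(n)}|\|_{L^2_y} \leq \bigl(\sum_{\text{distinct}}\|\delta_k^{(n)}\|_{L^2_y}^2\bigr)^{1/2} \lesssim \sqrt{M_{\lambda_n}}\,\lambda_n$, and the same Bessel/orthogonality estimate (using $\widetilde\V^q \le \widetilde\V^2$ for $q\ge 2$) bounds the $q$-variation piece by $\sqrt{M_{\lambda_n}}\,\lambda_n$ as well. On the $L^\infty$ side, the trivial pointwise estimate $|\delta_k^{(n)}(y)| \le \sqrt{N}\lambda_n$ yields $\sup_k|\delta_k^{(n)}(y)| \le \sqrt{N}\lambda_n$ for the supremum piece; for the $q$-variation piece one exploits that the scalar sequence $\{\delta_k^{(n)}(y)\}_k$ takes at most $\lesssim M_{\lambda_n}$ distinct values inside a ball of radius $\sqrt{N}\lambda_n$, producing the bound $\widetilde\V^q(\delta_k^{(n)}(y)) \lesssim M_{\lambda_n}^{1/q}\cdot\sqrt{N}\lambda_n$. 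Taking the minimum of the two bounds at each level gives the integrands, and summing over $n$ concludes.

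The main obstacle is this last $L^\infty$-pointwise $q$-variation bound: the naive inequality $\widetilde\V^q(a_k) \lesssim M^{1/q}\cdot\sup_k|a_k|$ for a sequence taking at most $M$ distinct values is not literally valid, since an infinite sequence oscillating between even two values can have unbounded $q$-variation. Justifying it requires using the nested chain structure to pair each oscillation of $\delta_{\cdot}^{(n)}$ at scale $\lambda_n$ with a resolvable movement of the underlying $c_k$ at the finer scale $\lambda_{n+1}$, so that the effective number of independent jumps is controlled by the entropy $M_{\lambda_n}$. This is the step where the hypothesis $q>2$ enters essentially, permitting a Rademacher--Menshov-style reordering (along the lines of \cite[Lemma 4.32]{B1}) whose logarithmic overhead is absorbed into the outer dyadic sum.
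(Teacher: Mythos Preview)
The paper does not actually prove this lemma: it simply cites \cite[Lemma 3.2]{N1} for the homogeneous term and \cite[Lemma 8.4]{D1} for the supremum term. Your overall architecture---dyadic scales, a telescoping approximation $F_k = F_k^{(0)} + \sum_n \delta_k^{(n)}$, and at each level the competition between the $\ell^2$-orthogonality bound $M_\lambda^{1/2}$ and the trivial $\sqrt{N}$-loss bound---is exactly the skeleton of those arguments, so in that sense you are on the right track.

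The gap you flag is real, and your proposed repair is not the one that works. The problem is not fixed by ``pairing oscillations with finer-scale movements'' or by a Rademacher--Menshov reordering; those devices control maximal partial sums over a fixed index set, not the $q$-variation of a sequence that may revisit a small set of values arbitrarily often. The actual fix in \cite{N1} is to abandon entropy \emph{nets} in favor of a \emph{greedy jump selection}: at scale $\lambda_n$ set $k_0^{(n)}=1$ and inductively let $k_{i+1}^{(n)}$ be the first index past $k_i^{(n)}$ with $\|c_{k_{i+1}^{(n)}}-c_{k_i^{(n)}}\|_{\ell^2}>\lambda_n$, then define $c_k^{(n)}:=c_{k_i^{(n)}}$ for $k_i^{(n)}\le k<k_{i+1}^{(n)}$. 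This produces a piecewise-constant approximant at distance $\le\lambda_n$ from $c_k$ which changes value at most $M_{\lambda_n}$ \emph{times}, not merely takes $M_{\lambda_n}$ distinct values. Consequently $\delta_k^{(n)}=c_k^{(n)}-c_k^{(n-1)}$ changes at most $M_{\lambda_n}+M_{\lambda_{n-1}}\lesssim M_{\lambda_n}$ times, and the pointwise bound $\tilde\V^q(\delta_k^{(n)}(y))\lesssim M_{\lambda_n}^{1/q}\cdot\sqrt{N}\,\lambda_n$ is now immediate from $\ell^q\hookrightarrow\ell^\infty$ on a sequence of bounded length. (This also clarifies that the quantity $M_\lambda$ in the paper, despite being called ``entropy,'' is really functioning as the $\lambda$-jump count---note the stated inequality $\lambda M_\lambda^{1/q}\lesssim\tilde\V^q$ holds for jump numbers but fails for covering numbers.) Finally, the hypothesis $q>2$ is not what makes this step go through; it is used later, in Lemma~\ref{Tech}, to get a finite $L^2$ bound on the resulting integral.
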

\begin{proof}
The first term on the right hand side is from the homogeneous $q$-variation, which comes from the proof of \cite[Lemma 3.2]{N1}.
For the second term, which extends the result to non-homogeneous $q$-variation, see the proof of \cite[Lemma 8.4]{D1}.
\end{proof}

We are now ready for the proof of our $L^2$ result.
\subsection{The Proof}
\begin{proposition}
Suppose that $\{\xi_1,\dots,\xi_N\}$ are 1-separated frequencies, and
define
\[ \aligned
D_k (f) &:= \sum_{k=1}^N \int \hat{f}(\xi) \widehat{\phi_{2^k}}(\xi-\xi_j) e(\xi x) \ d\xi  \\
&= \sum_{k=1}^N e(\xi_j x) \int \hat{f}(\xi+\xi_j) \widehat{\phi_{2^k}}(\xi) e(\xi x) \ d\xi. \endaligned\]
Then
\[ \left\| \V^q(D_k (f) \ \big| k \geq 1 ) \right\|_{L^2_x(\RR)} \lesssim (\log N)^2 \cdot \left( 1 + \frac{q}{q-2} \right) \cdot \|f\|_{L^2_x(\RR)}.\]
\end{proposition}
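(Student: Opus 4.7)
The plan is to reduce the estimate to a discrete-in-$x$ setting by sampling $D_k(f)(x)$ on unit intervals, convert the resulting variation norm into $l^2([N])$-entropy integrals via Lemma~\ref{comb}, and finally feed those integrals through Lemma~\ref{Tech} and Lemma 3.33 of \cite{B1}. First I would rewrite
\[
D_k(f)(x) = \sum_{j=1}^N e(\xi_j x)\,(f_j * \phi_{2^k})(x),
\]
where $f_j$ is defined by $\widehat{f_j}(\xi) := \hat\psi(\xi)\,\hat f(\xi+\xi_j)$ for a Schwartz $\psi$ with $\hat\psi \equiv 1$ on $[-1/4,1/4]$ (a set containing $\supp \widehat{\phi_{2^k}}$ for every $k\geq 1$) and $\supp \hat\psi \subset [-1/2, 1/2]$. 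Since the $\xi_j$ are $1$-separated, the Fourier supports of the $f_j$ are essentially disjoint, and Plancherel yields the square-function bound $\bigl\|\bigl(\sum_j |f_j|^2\bigr)^{1/2}\bigr\|_{L^2(\RR)} \lesssim \|f\|_{L^2(\RR)}$.

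Next I partition $\RR$ into unit intervals $I_n = [n,n+1)$. Because $2^k \geq 2$ for $k \geq 1$, each $f_j * \phi_{2^k}$ is smooth at scale $\geq 2$ and is essentially constant on $I_n$; on $I_n$ I replace $D_k(f)(x)$ by the sampled surrogate $\tilde D_k(f)(x) := \sum_j e(\xi_j x)\,(f_j * \phi_{2^k})(n)$, absorbing the oscillation error by the standard telescoping device that recycles the proof with $\phi$ replaced by the band-limited Schwartz function $x\phi'(x)$ (cf.\ \cite{B1, N1}). Translating $I_n$ to $[0,1]$ and applying Lemma~\ref{comb} to the sequence $c_k(n,j) := (f_j * \phi_{2^k})(n) \in \RR^N$ yields
\[
\|\V^q(\tilde D_k(f))\|_{L^2(I_n)} \lesssim \int_0^\infty \min\{M_\lambda(n)^{1/2}, N^{1/2} M_\lambda(n)^{1/q}\}\,d\lambda + \int_0^\infty \min\{M_\lambda(n)^{1/2}, N^{1/2}\}\,d\lambda,
\]
where $M_\lambda(n)$ denotes the $l^2([N])$-entropy of the family $\{c_k(n,\cdot)\}_{k\geq 1}$.

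Squaring, summing over $n$ via $\|\cdot\|^2_{L^2(\RR)} = \sum_n \|\cdot\|^2_{L^2(I_n)}$, and then replacing $M_\lambda(n)$ by $M_\lambda(x)$ for $x \in I_n$ (using again that $f_j * \phi_{2^k}$ is smooth at scale $\geq 2$), I pass from the discrete sum to the continuous $L^2_x$-norm. Lemma~\ref{Tech} bounds the first integral by $\bigl(\log N \cdot q/(q-2)\bigr)^2 \bigl\|\bigl(\sum_j |f_j|^2\bigr)^{1/2}\bigr\|_{L^2}$, and Lemma 3.33 of \cite{B1} bounds the second by $(\log N)^2 \bigl\|\bigl(\sum_j |f_j|^2\bigr)^{1/2}\bigr\|_{L^2}$; combined with the square-function estimate of the first paragraph, this yields the claimed bound.

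The main obstacle is the sampling step: the replacement of $D_k(f)(x)$ by $\tilde D_k(f)(x)$ must be uniform in $k$ so that the $V^q$ norm of the error can be absorbed. This is handled by the ``integration by parts'' device that reduces the error term to an operator of exactly the same form but with smoothing kernel $x\phi'(x)$, whose analysis is a verbatim repeat of the main argument. The auxiliary passage from $\sum_n$ to $\int_\RR$ at the very end is routine given the continuity of the entropy functional $x \mapsto M_\lambda(x)$ at the scales under consideration.
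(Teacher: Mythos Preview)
Your overall architecture matches the paper's: rewrite $D_k(f)=\sum_j e(\xi_j x)(f_j*\phi_{2^k})(x)$, decouple the fast phase $e(\xi_j x)$ from the slow amplitude $(f_j*\phi_{2^k})(x)$ so that Lemma~\ref{comb} applies in a $y$-variable on a unit interval, and then feed the resulting entropy integrals through Lemma~\ref{Tech} (and Bourgain's Lemma 3.33). The paper even cites the same two inputs.

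The gap is in your decoupling mechanism. You freeze $(f_j*\phi_{2^k})$ at the left endpoint $n$ of each $I_n$ and assert that the sampling error ``reduces to an operator of exactly the same form but with smoothing kernel $x\phi'(x)$''. This is not correct. On $I_n$ the error is
\[
\sum_j e(\xi_j x)\bigl[(f_j*\phi_{2^k})(x)-(f_j*\phi_{2^k})(n)\bigr],
\]
and the bracket depends on \emph{both} $x$ and $n$; it is not of the form $(h_j*\psi_{2^k})(x)$ for any fixed $\psi$ and any $h_j$, so there is no ``same operator'' to recycle, and no best constant $B$ to absorb into. The function $x\phi'(x)$ you invoke is the one that appears in Lemma~\ref{var} via $C_\phi=\|x\phi'(x)\|_1$, coming from differentiating in the \emph{dilation} parameter $t$; it has nothing to do with a spatial sampling error. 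Your second hand-wave, replacing $M_\lambda(n)$ by $M_\lambda(x)$ to pass from $\sum_n$ to $\int_\RR$, is also unjustified: metric entropy is not continuous in $x$ in any useful sense.

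The paper avoids both problems with Bourgain's averaging trick. Let $B$ be the best constant in the (homogeneous) estimate. For $0\le y<1/100$ split $f_j=T_yf_j+(f_j-T_yf_j)$; band-limitedness gives $\|(\sum_j|f_j-T_yf_j|^2)^{1/2}\|_2<\tfrac12\|(\sum_j|f_j|^2)^{1/2}\|_2$, so the second piece is absorbed into $\tfrac{B}{2}$. For the main piece one \emph{changes variables} $x'=x+y$ rather than sampling: since $(T_yf_j*\phi_{2^k})(x)=(f_j*\phi_{2^k})(x+y)$,
\[
\sum_j e(\xi_j x)(T_yf_j*\phi_{2^k})(x)
=\sum_j e(-\xi_j y)\cdot\bigl[e(\xi_j x')(f_j*\phi_{2^k})(x')\bigr],
\]
and now the bracketed coefficients are genuinely $y$-independent. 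Lemma~\ref{comb} applies directly in $y$, and then Lemma~\ref{Tech} applies directly in $x'$ with no discrete-to-continuous passage. Replace your sampling step by this change-of-variables absorbing argument and the rest of your outline goes through.
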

This proposition, and its proof, are very similar to \cite[Proposition 4.1]{N1}, which is in turn is inspired by \cite[Lemma 4.11]{B1}. The only novelty is the use of Lemma \ref{Tech}.
\begin{proof}
Define, via the Fourier transform,
\[ \hat{f_j}(\xi):= \hat{f}(\xi + \xi_j) \psi(\xi),\]
where $1_{[-1/4,1/4]} \leq \hat{\psi} \leq 1_{[-1/2,1/2]}$ (say), and note that by the separation of the $\{ \xi_j\}$ and Plancherel,
\[ \left\| \left( \sum |f_j|^2 \right)^{1/2} \right\|_{L^2_x} \leq \|f \|_2.\]

With this notation in hand, we may express
\[
D_k(f) = \sum_{k=1}^N e(\xi_j x) \int \widehat{f_j}(\xi) \widehat{\phi_{2^k}}(\xi) e(\xi x) \ d\xi = \sum_{k=1}^N e(\xi_j x) f_j * \phi_{2^k}(x); \]
it is therefore enough to prove the vector-valued estimate
\[ \aligned
&\left\| \V^q \left( \sum_{k=1}^N e(\xi_j x) f_j * \phi_{2^k} \ \big| k \geq 1 \right) \right\|_{L^2_x} \\
& \qquad \qquad \lesssim (\log N)^2 \cdot \left( 1 + \frac{q}{q-2} \right)^2 \cdot \left\|\left( \sum_j |f_j|^2\right)^{1/2} \right\|_{L^2_x}. \endaligned\]
\cite[Lemma 4.13]{B1} allows us to reduce the matter to the homogeneous case:
\[
(*) \; \; \; \left\| \tilde{\V}^q \left( \sum_{k=1}^N e(\xi_j x) f_j * \phi_{2^k} \ \big| k \geq 1 \right) \right\|_{L^2_x} \lesssim (\log N)^2 \cdot \left( \frac{q}{q-2} \right)^2 \cdot \left\| \left( \sum_j |f_j|^2\right)^{1/2} \right\|_{L^2_x},\]
where without loss of generality each $f_j$ has frequency support inside $[-1/2,1/2]$; we use Bourgain's averaging argument to do so.

Let $B$ denote the best a priori constant satisfying $(*)$, which we know is finite -- and indeed $\lesssim \sqrt{N}$ by Fefferman-Stein's vector-valued Hardy-Littlewood maximal inequality \cite[\S 2.1]{S}. The job is to prove
\[ B \lesssim (\log N)^2 \cdot \left( \frac{q}{q-2} \right)^2.\]

With $T_y(g)(x):= g(x+y)$, we majorize, for $0 \leq y < \frac{1}{100}$
\[ \aligned
& \left\| \tilde{\V}^q \left( \sum_{k=1}^N e(\xi_j x) f_j * \phi_{2^k} \ \big| k \geq 1 \right) \right\|_{L^2_x} \\
& \qquad \leq \left\| \tilde{\V}^q \left( \sum_{k=1}^N e(\xi_j x) (T_yf_j) * \phi_{2^k} \ \big| k \geq 1 \right) \right\|_{L^2_x} +
\left\| \tilde{\V}^q \left( \sum_{k=1}^N e(\xi_j x) (f_j - T_yf_j) * \phi_{2^k} \ \big| k \geq 1 \right) \right\|_{L^2_x} \\
& \qquad =: S_1(y) + B \left\|\left( \sum_j |(f_j-T_yf_j)|^2\right)^{1/2} \right\|_{L^2_x} \\
& \qquad < S_1(y) + \frac{B}{2} \left\| \left( \sum_j |f_j|^2\right)^{1/2} \right\|_{L^2_x}, \endaligned\]
where we used the band-limited nature of the $\{f_j\}$ in the last inequality.

Averaging this inequality over $0 \leq y < \frac{1}{100}$, we see that we just need to prove
\[ \aligned
\left( \int_0^{\frac{1}{100}} |S_1(y)|^2 \ dy \right)^{1/2} &\leq
\left\| \left\| \tilde{\V}^q \left( \sum_{k=1}^N e(\xi_j y) \cdot \left( e(\xi_j x) (f_j) * \phi_{2^k}(x) \right) \ \big| k \geq 1 \right) \right\|_{L^2_y[0,1]} \right\|_{L^2_x} \\
&\lesssim (\log N)^2 \cdot \left( \frac{q}{q-2} \right)^2 \cdot \left\| \left( \sum_j |f_j|^2\right)^{1/2} \right\|_{L^2_x}. \endaligned \]
If we apply Lemma \ref{comb} to the inner integral, we get the bound
\[ \aligned
&\left\| \tilde{\V}^q \left( \sum_{k=1}^N e(\xi_j y) \cdot \left( e(\xi_j x) (f_j) * \phi_{2^k}(x) \right) \ \big| k \geq 1 \right) \right\|_{L^2_y[0,1]}  \\
& \qquad \leq \int_0^\infty \min\{ M_\lambda^{1/2}(x), N^{1/2} M_\lambda^{1/q}(x) \} \ d\lambda, \endaligned\]
where $M_\lambda(x)$ is the (pointwise) $l^2[N]$-$\lambda$-entropy of
\[ \big( e(\xi_1 x) (f_1) * \phi_{2^k}(x), e(\xi_2 x) (f_2) * \phi_{2^k}(x), \dots, e(\xi_N x) (f_N) * \phi_{2^k}(x) \big)_{k \geq 1}.\]
Taking $L^2_x$-norms and using Lemma \ref{Tech} now yields the result.
\end{proof}

\section{The Weak-Type $(1-1)$ Estimate}
As announced, in this section we prove Proposition \ref{1-1}, restated below for the reader's convenience:
\begin{proposition}
With $D_k(f) := \sum_{|\omega|= 2^{-k} : \omega \cap \Sigma \neq \emptyset } \int \hat{f}(\xi) \hat{\phi_\omega}(\xi) e(\xi x) \ d\xi,$
\[ \left\| \V^q(D_k (f)) \right\|_{L^{1,\infty}} \lesssim \sqrt{N} \cdot \log^3 N \cdot \left( 1 + \frac{q}{q-2} \right)^2 \|f \|_{L^1}.\]
\end{proposition}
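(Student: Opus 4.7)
The plan is to apply the Multi-Frequency Calder\'on-Zygmund decomposition of \cite{N1} at a carefully chosen level $\beta$, and then estimate the good and bad parts separately. The key observation is that calibrating the CZ threshold to depend on the target constant $\sqrt{N} \log^3 N (1 + q/(q-2))^2$ exactly balances the good-part $L^2$-Chebyshev estimate against the bad-part support bound, yielding the optimal prefactor $\sqrt{N}$.

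Fix $\alpha > 0$ and set $\beta := c_0 \alpha \cdot N^{-1/2} (\log N)^{-3} (1 + q/(q-2))^{-2}$ for a small absolute constant $c_0$. Applying the multi-frequency CZ decomposition of \cite{N1} at level $\beta$, adapted to $\Sigma$, we write $f = g + \sum_I b_I$ where the $\{I\}$ are pairwise disjoint intervals with $\sum_I |I| \lesssim \beta^{-1} \|f\|_1$, the good part satisfies $\|g\|_2^2 \lesssim \beta N \|f\|_1$, and each $b_I$ is supported in $I$, obeys $\sum_I \|b_I\|_1 \lesssim \|f\|_1$, and has vanishing $N$-frequency moments $\int b_I(y) e(-\xi_j y) \psi_I(y)\,dy = 0$ for all $\xi_j \in \Sigma$ and all bumps $\psi_I$ adapted to $I$. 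For the good part, Chebyshev combined with Proposition \ref{main2} yields
\[ \bigl|\{ \V^q(D_k g) > \alpha/2 \}\bigr| \lesssim \frac{(\log N)^6 \bigl(1 + \tfrac{q}{q-2}\bigr)^4 \|g\|_2^2}{\alpha^2} \lesssim \frac{(\log N)^6 \bigl(1 + \tfrac{q}{q-2}\bigr)^4 \beta N \|f\|_1}{\alpha^2},\]
which, by the choice of $\beta$, collapses to $\sqrt{N} (\log N)^3 (1 + q/(q-2))^2 \|f\|_1/\alpha$, the desired bound.

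For the bad part, set $\Omega := \bigcup_I 3I$; since $|\Omega| \lesssim \beta^{-1} \|f\|_1$ already lies within the target, it suffices to prove $\|\V^q(D_k b)\|_{L^1(\Omega^c)} \lesssim \|f\|_1$. Fix $b_I$ with $|I| \sim 2^{-k_I}$ and split the scales into narrow ($k > k_I$) and wide ($k \leq k_I$). In the narrow regime the Schwartz tails of $\phi_\omega$ combined with $\supp b_I \subset I$ yield geometric decay in $\mathrm{dist}(x,I)/2^k$, integrable over $x \in \Omega^c$ and summable in $k$ to $\lesssim \|b_I\|_1$. In the wide regime, writing the kernel $K_k = \sum_{|\omega|=2^{-k}} \check\phi_\omega$ and Taylor-expanding the envelope $\tilde\phi_{2^k}$ about $y = c_I$, the zeroth-order term is annihilated by the multi-frequency moment condition, producing a gain of $|I|/2^k$ that again sums geometrically to $\lesssim \|b_I\|_1$ on $\Omega^c$. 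Because both regimes telescope geometrically in $k$, the successive differences $|D_k b_I - D_{k+1} b_I|$ form an $\ell^1$-summable sequence, so the $q$-variation is controlled by the supremum on each regime with no extra logarithmic loss. Summing over $I$ and using $\sum_I \|b_I\|_1 \lesssim \|f\|_1$ completes the estimate.

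The main obstacle is controlling the $q$-variation of the bad part without incurring a logarithmic loss; the savings over \cite[Theorem 5.1]{N1} come from recognizing that both the narrow and wide scale regimes are geometrically summable, so that the sharp variational constants of Proposition \ref{main2} need only be invoked once, for the good part. This is what accounts for the improved $\sqrt{N}$ prefactor.
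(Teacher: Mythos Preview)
Your calibration of the Calder\'on--Zygmund height $\beta$ and the good-part Chebyshev estimate are fine, and indeed cleaner than the paper's presentation (which applies the decomposition at height $1$ and, read literally, loses an extra factor of $\log^3 N (1+q/(q-2))^2$).

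The gap is in your treatment of the bad part on $\Omega^c$, specifically in the wide regime. Your claim that ``Taylor-expanding the envelope \ldots produces a gain of $|I|/2^k$ that sums geometrically to $\lesssim \|b_I\|_1$'' overlooks the fact that $D_k$ is a sum of up to $N$ modulated pieces. After the moment subtraction, each individual term $\phi_\omega * b_I$ indeed obeys $\|\phi_\omega * b_I\|_{L^1} \lesssim \|b_I\|_1 \cdot |I|/2^k$, but the triangle inequality over $\omega$ gives $\|D_k b_I\|_{L^1} \lesssim N \|b_I\|_1 |I|/2^k$, and summing over $k$ with $2^k \geq |I|$ yields $N\|b_I\|_1$, not $\|b_I\|_1$. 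The resulting weak-type constant is then $N$ rather than $\sqrt{N}$. (Also, the moment condition you invoke, $\int b_I e(-\xi_j y)\psi_I(y)\,dy = 0$ for \emph{all} bumps $\psi_I$, is stronger than what the decomposition actually supplies.)

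The paper avoids this $N$ loss by a genuinely different mechanism. It first discretizes each $D_k$ via a windowed Fourier series, then estimates $\sum_{k\geq 0}\|D_k b_J\|_{L^1}$ by Cauchy--Schwarz against a localized bump, reducing matters to an $L^2$ estimate in which the modulated pieces $e(\xi_\omega x) A(\cdot)$ are \emph{orthogonal}. At this point the paper splits $b_J = f_J - g_J$ and uses the refined Calder\'on--Zygmund information $\|f_J\|_1 \lesssim |J|/\sqrt{N}$ (this is where the $\sqrt{N}$ is spent) together with a duality/almost-orthogonality kernel bound for $g_J$ to obtain $\|D_k b_J\|_{L^1} \lesssim |I|^{-1/4}$ per scale, with no residual $N$-dependence. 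Your sketch contains none of this $L^2$ structure, and without it the wide-regime estimate fails.
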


Before turning to the proof proper, we briefly recall the multi-frequency Calder\'{o}n-Zygmund Decomposition:
\subsection{The Multi-Frequency Calder\'{o}n-Zygmund}
\begin{theorem}[\cite{N1}, Theorem 1.1]\label{MFCZ}
For any $f \in L^1(\RR)$, $\lambda >0$, there exists a decomposition
\[ f = g + b = g + \sum_{J} b_J\]
for a disjoint collection of (dyadic) intervals $\{J\}$ according to the following properties:
\begin{enumerate}
\item $\|f_J\|_1 := \|f1_J\|_1 \lesssim \frac{\lambda|J|}{\sqrt{N}}$;
\item $\| g_J \|_2 := \| f_J - b_J \|_2 \lesssim |J|^{1/2} \lambda$;
\item $\|g\|_2^2 \lesssim \sqrt{N} \lambda \|f\|_1$;
\item $\supp b_J \subset 3J, \ \sum_{J} |J| \lesssim \sqrt{N}\frac{\|f\|_1}{\lambda}$;
\item $\|b_J\|_1 \lesssim \lambda |J|$; and
\item $b$ is orthogonal to the frequencies: $\int b_J(x) e(- \xi_j x) = 0$ for each $J, n$.
\end{enumerate}
\end{theorem}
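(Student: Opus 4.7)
The plan is to follow the classical Calder\'on--Zygmund construction, with two modifications tailored to the multi-frequency setting: the stopping level is lowered from $\lambda$ to $\lambda/\sqrt{N}$, and on each stopping interval $J$ the bad piece $b_J$ is chosen orthogonal to each of the $N$ distinguished exponentials $e(\xi_j \cdot)$.

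First I would run the standard dyadic Calder\'on--Zygmund stopping-time on $|f|$ at height $\lambda/\sqrt N$: let $\{J\}$ be the (disjoint) collection of maximal dyadic intervals with $|J|^{-1}\int_J |f| > \lambda/\sqrt N$. Dyadic maximality (the parent of $J$ has $|f|$-average at most $\lambda/\sqrt N$ and length $2|J|$) yields $\|f\cdot 1_J\|_1 \le 2\lambda|J|/\sqrt N$, proving (1); Chebyshev gives $\sum_J |J|\le \tfrac{\sqrt N}{\lambda}\|f\|_1$, the cardinality statement in (4); and Lebesgue differentiation gives $|f|\le \lambda/\sqrt N$ a.e.\ on $E := \RR\setminus\bigcup_J J$. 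For each stopping interval $J$ define
\[ W_J \;:=\; \Span\bigl\{1_{3J}(x)\,e(\xi_j x) : 1\le j\le N\bigr\} \;\subset\; L^2(\RR), \]
a subspace of dimension $\le N$ whose elements are supported in $3J$. Let $g_J$ be the $L^2$-orthogonal projection of $f_J := f\cdot 1_J$ onto $W_J$, and set $b_J := f_J - g_J$, $b := \sum_J b_J$, $g := f - b$. The containment $\supp b_J \subset 3J$ is automatic, and for every $j$
\[ \int b_J(x)\,e(-\xi_j x)\,dx \;=\; \langle f_J - g_J,\ 1_{3J}\, e(\xi_j \cdot)\rangle \;=\; 0 \]
because $1_{3J}\,e(\xi_j \cdot)\in W_J$ and $g_J$ is the projection, establishing (6).

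The heart of the argument is the local $L^2$ bound (2). By self-adjointness of the projection,
\[ \|g_J\|_2^2 \;=\; \langle g_J, g_J\rangle \;=\; \langle f_J, g_J\rangle \;\le\; \|f_J\|_1\,\|g_J\|_\infty, \]
and I would invoke the Bernstein-type inequality
\[ (\ast)\qquad \|h\|_{L^\infty(3J)} \;\lesssim\; \sqrt{N/|J|}\,\|h\|_{L^2(3J)}\qquad\text{for all } h\in W_J. \]
Applied to $h = g_J$ and combined with (1), this yields $\|g_J\|_2 \lesssim \sqrt{N/|J|}\cdot\lambda|J|/\sqrt N = \lambda |J|^{1/2}$, which is (2), whence (5) is immediate via $\|b_J\|_1 \le \|f_J\|_1 + |3J|^{1/2}\|g_J\|_2 \lesssim \lambda|J|$. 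For the global bound (3), the bounded overlap of $\{3J\}$ together with the pointwise bound on $E$ gives
\[ \|g\|_2^2 \;\lesssim\; \int_E |f|^2 + \sum_J \|g_J\|_2^2 \;\lesssim\; \frac{\lambda}{\sqrt N}\|f\|_1 + \lambda^2\sum_J |J| \;\lesssim\; \sqrt N\,\lambda\,\|f\|_1. \]

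The sole substantial obstacle is the Bernstein inequality $(\ast)$. The naive approach — writing $h = \sum_j c_j e(\xi_j \cdot)$, so that $|h(x)|\le \sqrt N\|c\|_{\ell^2}$, and relating $\|c\|_{\ell^2}$ to $\|h\|_{L^2(3J)}$ via the inverse Gram matrix of $\{1_{3J}e(\xi_j \cdot)\}$ — fails when several $\xi_j$'s cluster on scale $1/|J|$, since then the Gram matrix can be arbitrarily singular. The correct route is through the reproducing kernel $K_J$ of $W_J$: the pointwise estimate $|h(x)|^2 \le K_J(x,x)\|h\|_2^2$ combined with the trace identity $\int_{3J} K_J(x,x)\,dx = \dim W_J \le N$ delivers $(\ast)$ on average, and upgrading this to the uniform bound $K_J(x,x)\lesssim N/|J|$ across $3J$ — by exploiting the modulation symmetry of the family of generators of $W_J$ — is the technical input whose execution is the main content of the theorem.
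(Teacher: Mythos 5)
You have left a genuine gap, and it sits exactly at the heart of the theorem. First, for context: the paper does not prove this statement at all -- it is quoted verbatim from \cite{N1}, where the substance is an $L^p\to L^2$ bound for the orthogonal projection onto the span of the $N$ exponentials (their Lemma 2.1). Your skeleton (dyadic stopping at height $\lambda/\sqrt N$, then the minimal-$L^2$-norm correction, i.e.\ projection onto $W_J$, to force the orthogonality (6)) is indeed the right skeleton and matches the structure of the cited proof; properties (1), (4), (5), (6) come out correctly as you say. But everything then rests on the Bernstein-type inequality $(\ast)$, which you do not prove, and which is in fact \emph{false} as you state it. If the frequencies cluster at a scale much smaller than $1/|J|$, the space $W_J$ degenerates (in the gap metric) to $e(\xi_1 x)\cdot\{\text{polynomials of degree}\le N-1\}$ on $3J$, and for that space the reproducing kernel on the diagonal at the endpoints of $3J$ is of size $\sim N^2/|J|$ (Legendre asymptotics: an $L^2$-normalized degree-$(N-1)$ polynomial can have endpoint value $\sim N/|J|^{1/2}$). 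So the uniform bound $K_J(x,x)\lesssim N/|J|$ across $3J$, which you describe as the remaining technical input, cannot hold; the trace identity only controls the \emph{average} of $K_J(x,x)$, and the "modulation symmetry" you invoke does nothing to rule out clustering. What is actually needed -- and is the real content of \cite{N1} -- is the interior estimate: evaluation at points of $J$ (where $f_J$ lives) against the $L^2(3J)$ norm of elements of $W_J$, i.e.\ $\sup_{x\in J}K_J(x,x)\lesssim N/|J|$. This is precisely why the statement allows $\supp b_J\subset 3J$ rather than $J$; with supports in $J$ itself the endpoint degeneration above would contradict (1)+(2) outright. Proving this interior Christoffel-function bound for arbitrary (possibly clustered) frequency configurations is a genuine argument, not a routine upgrade of the trace identity, so your reduction leaves the theorem essentially unproved.

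A secondary slip: your derivation of (3) invokes "bounded overlap of $\{3J\}$", but maximal dyadic stopping intervals of very different lengths can accumulate at a common boundary point, so $\sum_J 1_{3J}$ is not uniformly bounded; the estimate $\|g\|_2^2\lesssim \sqrt N\,\lambda\|f\|_1$ needs a more careful bookkeeping of the interactions $\langle g_J,g_{J'}\rangle$ (or a construction keeping the supports disjoint). This part is fixable, but as written it is not a proof.
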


It will be convenient to discretize our operator. We do so as follows:
\subsection{A Discretization}

With $\phi_\omega$ as above, 
let $\eta \in \s(\RR)$ be a positive mean-one mollifier with $\supp \eta \subset [-0.1,0.1]$ (say), and let $A$ be a Schwartz function with
\[ 1_{[-1.4,1.4]} \leq \hat{A}(\xi) \leq 1_{[-1.6,1.6]}.\]
%

With $B:= \eta^{\vee}$, we use windowed Fourier series as in \cite[\S 6]{D} to express
\[
\hat{f}\widehat{\phi_\omega}(\xi) = \frac{1}{|\omega|} \sum_{l \in \Z} \left( \int \hat{f}(t)\widehat{\phi_\omega}(t) e( \frac{l}{|\omega|} t) \ dt \right) e(- \frac{l}{|\omega|} \xi) \widehat{(AB)}\left( \frac{\xi-\xi_\omega}{|\omega|} \right).\]

Set
\[ \phi_{l,\omega}(z):= e(-z \xi_\omega) \frac{1}{|I|} \tilde{\phi} \left( \frac{z - l|I|}{|I|} \right) \]
where $\tilde{g}(z):= g(-z)$ denotes reflection about the origin, and $|I| = |\omega|^{-1}$; we note that in the special case $l=0$, we have
\[ \phi_{l,\omega}(z) = e(-z \xi_\omega) \tilde{\phi}_{|I|}(z).\]

After an argument with the Fourier transform, we have
\[
f*\phi_\omega(x) = \sum_{l \in \Z} \langle f, \phi_{l,\omega} \rangle e(\xi_\omega x) \cdot (AB)\left(\frac{x- l|I|}{|I|} \right) =:\sum_{l \in \Z} \langle f, \phi_{l,\omega} \rangle e(\xi_\omega x) \cdot T_{l,|I|} \big(AB\big)(x).
\]
Consequently, with $2^k = |I| = |\omega|^{-1}$, we may express
\[
D_k(f):= \sum_{l \in \Z } \left( \sum_{|\omega|=2^{-k} : \omega \cap \Sigma \neq \emptyset} \langle f, \phi_{l,\omega} \rangle e(\xi_\omega x) \right) T_{l,|I|}\big(AB\big)(x),\]
as a sum of projections.

\begin{proof}[Proof of Theorem \ref{main}]
By homogeneity, it suffices to prove the weak-type estimate at height $\lambda = 1$. We therefore apply the Multi-Frequency Calder\'{o}n-Zygmund decomposition \ref{MFCZ} at height 1. By standard arguments, it suffices to show that for each $J$ selected
\[ \| \V^q(D_k (b_J )) \|_{L^1((J^*)^c)} \lesssim |J|,\]
where $J^* := 100J$. By scale and translation invariance, we may assume $J = [0,1)$; we proceed to majorize
\[ \V^q(D_k (b_J )) \leq \sum_{k \geq 0} |D_k(b_J)| + \sum_{k < 0} |D_k(b_J)| ,\]
according to scale, and estimate each term separately in $\| - \|_{L^1((J^*)^c)}$.

We begin with the more demanding
\begin{proof}[Case 1: $k \geq 0$]
We use our discretization to bound
\[ \aligned
&\left\| \sum_{k \geq 0} |D_k(b_J)| \right\|_{L^1} \\
& \qquad
\leq \sum_{l \in \Z} \sum_{|I|=2^k, k \geq 0} \left\| \left(\sum_{|\omega|=2^{-k} : \omega \cap \Sigma \neq \emptyset} \langle b_J, \phi_{l,\omega} \rangle e(\xi_\omega x) T_{l,|I|}A \right) \cdot \left( T_{l,|I|}B \right) \right\|_{L^1} \\
& \qquad
\leq \sum_{l \in \Z} \sum_{|I|=2^k, k \geq 0} \| T_{l,|I|}B\|_{L^2} \left\| \sum_{|\omega|=2^{-k} : \omega \cap \Sigma \neq \emptyset} \langle b_J, \phi_{l,\omega} \rangle e(\xi_\omega x) T_{l,|I|}A \right\|_{L^2} \\
& \qquad
\lesssim \sum_{l \in \Z} \sum_{|I|=2^k, k \geq 0} |I|^{1/2} \left\| \sum_{|\omega|=2^{-k} : \omega \cap \Sigma \neq \emptyset} \langle b_J, \phi_{l,\omega} \rangle e(\xi_\omega x) T_{l,|I|}A(x) \right\|_{L^2}. \endaligned \]
We now split our sum
\[
\sum_{|\omega|=2^{-k} : \omega \cap \Sigma \neq \emptyset} := \sum_{i=1}^{10} \sum_{\omega,i} := \sum_{i=1}^{10} \left( \sum_{\omega \cap \Sigma \neq \emptyset, \ \omega = 2^{-k}[m,m+1), \ m \equiv i \mod 10} \right), \]
so that we may bound
\[ \aligned
\left\| \sum_{|\omega|=2^{-k} : \omega \cap \Sigma \neq \emptyset} \langle b_J, \phi_{l,\omega} \rangle e(\xi_\omega x) T_{l,|I|}A(x) \right\|_{L^2}
&\leq
\sum_{i=1}^{10} \left\| \sum_{\omega,i} \langle b_J, \phi_{l,\omega} \rangle e(\xi_\omega x) T_{l,|I|}A(x) \right\|_{L^2} \\
&= \sum_{i=1}^{10} \left\| \left( \sum_{\omega,i} |\langle b_J, \phi_{l,\omega} \rangle |^2 | T_{l,|I|}A |^2 \right)^{1/2} \right\|_{L^2}, \endaligned \]
taking into account the band-limited nature of our function $A$.
Our bounds will be uniform in $1 \leq i \leq 10$, so we suppress $i$-dependence throughout.

We begin with the top term, $l=0$, and suppress $l$-dependence as well;
we will show that for each $|I|$,
\[ \left\| \left( \sum_{\omega}  |\langle b_J, \phi_{\omega} \rangle|^2 |T_{|I|}A|^2 \right)^{1/2} \right\|_{L^2} \lesssim \frac{1}{|I|^{3/4}},\]
since a sum over $|I| \geq 1$ will yield the $l=0$ bound
\[
\sum_{k \geq 0} \left\| \sum_{\omega } \langle b_J, \phi_{l,\omega} \rangle e(\xi_\omega x) T_{l,|I|}\big( AB \big) \right\|_{L^1} \lesssim 1.\]

Regarding our scale $|I|=2^k$ as fixed, we abbreviate
\[ \rho(z):= \tilde{\phi}_{|I|}(z) - \tilde{\phi}_{|I|}(0) =
\frac{1}{|I|} \tilde{\phi}(\frac{z}{|I|}) - \frac{1}{|I|} \tilde{\phi}(0),\]
and use the orthogonality of the $\{b_J\}$ to our frequencies $\{ e( -z \xi_\omega)\}$ to re-express the integrand as
\[
\left( \sum_{\omega} | \langle b_J, \rho(z) e( -z \xi_\omega) \rangle|^2 |T_{|I|}A|^2(x) \right)^{1/2}; \]
we remark the mean-value theorem yields the bound
\[ (*_0) \; \; \; | \rho(z) | \lesssim \frac{1}{|I|^2} \]
pointwise near $3J$, with implicit constant depending on $\| \phi' \|_{L^{\infty}}$.

Now, using the triangle inequality, it suffices to estimate the $L^2$ norm of each of the following functions separately:
\[ \aligned
F(x) &:= \left( \sum_{\omega} | \langle f_J,\rho(z) e( -z \xi_\omega) \rangle|^2 |T_{|I|}A|^2(x) \right)^{1/2} \\
G(x) &:= \left( \sum_{\omega} | \langle g_J, \rho(z) e( -z \xi_\omega) \rangle|^2 |T_{|I|}A|^2(x) \right)^{1/2}.
\endaligned \]
$F$ is easy, since we have the pointwise bound
\[
F(x) \leq
\left( \sum_\omega  \left( \|f_J \|_1 \cdot \frac{1}{|I|^2} \right)^2 \right)^{1/2} |T_{|I|}A|(x) \leq \sqrt{N} \|f_J\|_1 \frac{1}{|I|^2} |T_{|I|}A|(x). \]
Consequently,
\[ \| F\|_{L^2} \lesssim \sqrt{N} \|f_J\|_1 \frac{1}{|I|^2} |I|^{1/2} \lesssim \frac{1}{|I|^{3/2}}.\]

We use duality to handle $\| G\|_{L^2}$. Specifically, for an appropriate $\sum \| \psi_\omega \|_{L^2}^2 \leq 1$,
we estimate
\[ \aligned
\| G\|_2 &\lesssim
\left|\int \sum_{\omega} \langle \psi_\omega , T_{|I|}A \rangle \cdot \langle g_J , \rho(z) e( -z \xi_\omega) \rangle \right| \\
&= \left| \langle g_J , \sum_{\omega} \rho(z) e( -z \xi_\omega) \cdot \langle \psi_\omega , T_{|I|}A \rangle \rangle \right| \\
&\lesssim \left\| \sum_{\omega} \rho(z) e( -z \xi_\omega) \cdot \langle \psi_\omega , T_{|I|}A \rangle  \right\|_{L^2(3J)} \\
&\leq
\left\| \sum_{\omega} \chi(z) \rho(z) e( -z \xi_\omega) \cdot \langle \psi_\omega , T_{|I|}A \rangle \right\|_{L^2(\RR)},
\endaligned \]
where $1_{[-4,4]} \leq \chi \leq 1_{[-5,5]}$ (say) is a smooth bump function, and we have made use of the normalization $\| g_J \|_{L^2} \lesssim 1$, along with the support condition $\supp g_J \subset 3J$.

We will bound the above expression by the square root of
\[ \aligned
& \sum_\omega \|  \rho(z) e( -z \xi_\omega) \chi \|_{L^2}^2 \cdot |\langle \psi_\omega,  T_{|I|}A \rangle|^2
+
\sum_{\omega \neq \omega'}
|\langle \psi_\omega , T_{|I|}A \rangle| K(\omega,\omega') |\langle \psi_{\omega'} , T_{|I|}A \rangle| \\
& \qquad \lesssim |I|
\left( \sum_\omega \|\rho(z) \chi(z) e( -z \xi_\omega) \|_{L^2}^2 \|\psi_\omega\|_2^2 +
\sum_{\omega \neq \omega'}
 \| \psi_\omega \|_{L^2} K(\omega,\omega')  \| \psi_{\omega'} \|_{L^2} \right) \\
& \qquad \qquad =: |I| \left( \mathcal{D} + \mathcal{O} \right), \endaligned \]
where
\[ K(\omega,\omega'):=
\left| \int |\rho(z) \chi(z) |^2 e( (\xi_{\omega'} - \xi_{\omega}) z ) \ dz \right|.\]
Using $(*)$, the diagonal estimate is straightforward:
\[ \mathcal{D} \leq
\max_\omega
\| \rho(z) \chi(z) e(-\xi_\omega z) \|_{L^2}^2 \\
\lesssim \frac{1}{|I|^4}. \]
We will use the bound
\[ K(\omega,\omega') \lesssim \min \left\{ \frac{1}{|\xi_\omega - \xi_{\omega'}|}, \frac{1}{|\xi_\omega - \xi_{\omega'}|^2} \right\} \frac{1}{|I|^4} \leq \frac{1}{|\xi_\omega - \xi_{\omega'}|^{3/2}} \cdot \frac{1}{|I|^4} \]
to control the off-diagonal term. We obtain this estimate by integrating by parts once and twice, and by using that on $\supp \chi$ we have the bounds
\[ |\rho| + |\rho'| \lesssim \frac{1}{|I|^2}, \ |\rho''| \lesssim \frac{1}{|I|^3}. \]
Using the fact that for $\omega \neq \omega'$
\[|\xi_\omega - \xi_{\omega'}| > |\omega| = |I|^{-1}, \]
we may bound, for each $\omega$,
\[
\sum_{\omega' \neq \omega} K(\omega, \omega') \leq \frac{1}{|I|^4}\sum_{\omega' \neq \omega} \frac{1}{|\xi_\omega - \xi_{\omega'}|^{3/2}}
\lesssim \frac{|I|^{3/2}}{|I|^4} \sum_{n=1}^N \frac{1}{n^{3/2}} \lesssim \frac{1}{|I|^{5/2}}. \]


With this in hand, Young's inequality leads to the bound
\[
\mathcal{O} \lesssim \sum_{\omega \neq \omega'} \|\psi_\omega \|_{L^2}^2 |K(\omega,\omega')|
 \lesssim \sum_\omega \| \psi_\omega\|_{L^2}^2 \frac{1}{|I|^{5/2}} \leq
 \frac{1}{|I|^{5/2}} , \]

Putting things together, we have obtained the estimate
\[ |I| \left( \mathcal{D} + \mathcal{O} \right) \lesssim \frac{1}{|I|^{3/2}};\]
taking a square-root leads to the bound
\[ \| G  \|_{L^2} \lesssim
\frac{1}{|I|^{3/4}}. \]
Combining all our estimates, we may bound
\[ \left\| \left( \sum_{\omega} |\langle b_J, \phi_{\omega} \rangle|^2 |T_{l,|I|}A|^2 \right)^{1/2} \right\|_{L^2} \leq \| F\|_{L^2} + \|G\|_{L^2} \lesssim \frac{1}{|I|^{3/4}},\]
which concludes the $l=0$ case.

The remaining terms $l \neq 0$ are handled similarly, using the analogous mean-value estimate on $J$
\[ (*_{|l|}) \; \; \; \left| \frac{1}{|I|}\tilde{\phi}\left( \frac{z - l|I|}{|I|}  \right) -
\frac{1}{|I|}\tilde{\phi}\left( \frac{- l|I|}{|I|}  \right) \right| \lesssim \frac{1}{|I|^2} |l|^{-2}, \]
and similarly
\[ | \partial_z^{\alpha} \left( \frac{1}{|I|} \tilde{\phi}\left(\frac{z-l|I|}{|I|}\right) \right) | \lesssim_\phi \frac{l^{-2}}{|I|^{1+\alpha}}, \ \alpha = 1,2,\]
where the implicit constants depend further on the decay of $\phi', \phi''$.

This concludes the first case, $|I| \geq |J|=1$.
\end{proof}

We next turn to
\begin{proof}[Case 2: $k <0 $]
With $J = [0,1)$ as above, and regarding the scale $|I|= 2^{k}$ as fixed, we use the decay of the functions $\phi_\omega$ to estimate
\[ \aligned
\left\| \sum_{|\omega|=2^{-k} : \omega \cap \Sigma \neq \emptyset} \phi_\omega * b_J \right\|_{L^1((J^*)^c)} &\leq N \max_{\omega} \| \phi_\omega * b_J \|_{L^1((J^*)^c)} \\
&\leq N \|b_J\|_1 \left\| \sup_{y \in 3J} | \phi_\omega(z - y)| \right\|_{L^1((J^*)^c)} \\
&\lesssim N |I|^{M-1} (e)^{1-M}, \endaligned \]
where we may take
\[ M=M(N) = \log N, \]
and the implicit constant depends on the best bound, $D_M$, satisfying
$|\phi(z)| \leq D_M |z|^{-M}$ as $|z| \to \infty$.
Summing over $|I|$ concludes the proof.
\end{proof}
\end{proof}
\begin{remark}
As mentioned above, this argument is easily adapted to handle the (simpler) operator
\[ Vf(x):= \left( \sum_{n=1}^N \V^r_{k \geq 5} (1_I(x) \langle f, \phi_{I,n} \rangle )^2 \right)^{1/2} \]
of \cite[\S 5]{D}. Indeed, one can similarly prove that
\[ \| Vf\|_{L^{1,\infty}} \lesssim
\sqrt{N}  \|f\|_1,\]
with implicit constants depending on various (scalar) variational quantities discussed in \cite{D}.
\end{remark}

\begin{remark}
This proof further generalizes to the case of multiple weights, i.e.\ the $\{D_k\}$ are replaced with $\{ \Delta_k \}$ of \cite{N1}.
\end{remark}

\section{Multipliers with Bounded $r$-Variation}
This section will be devoted to Theorem \ref{rvar}.

We will use the following decomposition lemma \cite{CRS}:

\begin{lemma}
Suppose $1 \leq r < \infty$, and $\| g \|_{\V^r(\RR)} < \infty$. Then there exist collections of intervals $\mathcal{I}_j$ so that we may decompose
\[ g = \sum_{j \geq 0} \sum_{I \in \mathcal{I}_j} d_I 1_I,\]
where $|\mathcal{I}_j| \leq 2^j$, $|d_I| \leq 2^{-j/r} \| g\|_{\V^r(\RR)}$ for each $I \in \mathcal{I}_j$, and the intervals $I \in \mathcal{I}_j$ are pairwise disjoint.
\end{lemma}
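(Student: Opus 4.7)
The plan is a greedy dyadic decomposition at the natural scales $\lambda_j := 2^{-j/r}\|g\|_{\V^r(\RR)}$. Without loss of generality I normalize $\|g\|_{\V^r(\RR)} = 1$. For each $j \geq 0$, I construct a partition $\mathcal{P}_j$ of $\RR$ into consecutive (pairwise disjoint) intervals on each of which the oscillation of $g$ is at most $\lambda_j$, bound $|\mathcal{P}_j| \lesssim 2^j$ using the $\V^r$-hypothesis, and then telescope to produce the decomposition.

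To construct $\mathcal{P}_j$, I run a left-to-right stopping-time: start at $-\infty$, grow the current interval to the right until the oscillation of $g$ on it first reaches $\lambda_j$, declare a breakpoint, and iterate (exercising the usual care about half-open intervals and the fact that the oscillation is a supremum). To bound $|\mathcal{P}_j|$: if this greedy algorithm produces $K+1$ intervals, then around each of the $K$ breakpoints I may choose a pair $(y_k, z_k)$ with $y_k < z_k \leq y_{k+1}$ and $|g(z_k) - g(y_k)| \geq \lambda_j$. Concatenating these pairs into a single strictly increasing sequence, the defining inequality for $\V^r$ yields
\[ 1 \;=\; \V^r(g)^r \;\geq\; K \lambda_j^r \;=\; K \cdot 2^{-j}, \]
so $K \leq 2^j$.

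For the telescoping step, I set $g_j$ to be the step function equal to $g(x_I)$ on each $I \in \mathcal{P}_j$ for a chosen $x_I \in I$; then $\|g - g_j\|_\infty \leq \lambda_j \to 0$, so $g = g_0 + \sum_{j \geq 0} (g_{j+1} - g_j)$ converges uniformly. On each interval of the common refinement $\mathcal{R}_j := \mathcal{P}_j \vee \mathcal{P}_{j+1}$ the difference $g_{j+1} - g_j$ is constant. This refinement is a collection of at most $|\mathcal{P}_j| + |\mathcal{P}_{j+1}| \leq 3 \cdot 2^j$ pairwise disjoint consecutive intervals, and on each the constant value is bounded by $\|g-g_j\|_\infty + \|g-g_{j+1}\|_\infty \leq 2 \cdot 2^{-j/r}$. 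Taking $\mathcal{I}_0 := \{\RR\}$ with coefficient $g_0|_{\RR}$ (bounded by $\|g\|_\infty \leq \|g\|_{\V^r}$) and $\mathcal{I}_j := \mathcal{R}_{j-C}$ for a fixed integer shift $C$ absorbs the constants $3$ and $2$ into the dyadic indexing, yielding the advertised bounds $|\mathcal{I}_j| \leq 2^j$ and $|d_I| \leq 2^{-j/r}\|g\|_{\V^r}$.

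The main technical point is the inequality $|\mathcal{P}_j| \leq 2^j$ — morally just the observation that $K$ non-overlapping $\lambda$-jumps contribute at least $K^{1/r}\lambda$ to the $r$-variation, but pinning it down cleanly requires a little care with the stopping time when $g$ is merely bounded rather than continuous (in particular, selecting the points $y_k, z_k$ straddling each breakpoint may force one to pass to a sup/inf limit rather than an attained extremum). Everything else is a routine accounting of constants in the telescoping step.
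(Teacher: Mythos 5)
The paper does not actually prove this lemma --- it is quoted from \cite{CRS} --- so there is no internal proof to compare against; I'll assess your argument on its own. Your core construction is sound: the greedy oscillation-stopping partition $\mathcal{P}_j$, the counting inequality (each of the $K$ breakpoints contributes a jump of size essentially $\lambda_j$ to a single increasing sequence, so $K\lambda_j^r \leq \tilde{\V}^r(g)^r$), and the telescoping $g = g_0 + \sum_j (g_{j+1}-g_j)$ are all correct, and you rightly flag the regulated-function technicalities (non-attained suprema, degenerate singleton cells, choosing the straddling pairs so that $z_k \leq y_{k+1}$, which does require a small right-to-left selection argument). Your observation that $\mathcal{P}_0 = \{\RR\}$ because the oscillation of $g$ never exceeds $\|g\|_{\V^r}$ is also fine.

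The gap is the last step. The claim that ``$\mathcal{I}_j := \mathcal{R}_{j-C}$ for a fixed integer shift $C$ absorbs the constants $3$ and $2$'' is false, because the two constraints pull in opposite directions: converting a layer of $\leq 3\cdot 2^i$ intervals into one of $\leq 2^j$ intervals forces $j \geq i+2$, while converting coefficients of size $2\cdot 2^{-i/r}$ into coefficients $\leq 2^{-j/r}\|g\|_{\V^r}$ forces $j \leq i - r$; no shift (and, as one checks, no rescaling of the thresholds $\lambda_j$ either) satisfies both, since the common refinement roughly triples the count while the triangle inequality $\|g-g_j\|_\infty + \|g-g_{j+1}\|_\infty$ roughly doubles the coefficient, and the target bounds $|\mathcal{I}_j|\leq 2^j$, $|d_I|\leq 2^{-j/r}\|g\|_{\V^r}$ leave no joint slack. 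As written you therefore obtain the lemma only with absolute constants, $|\mathcal{I}_j| \leq C2^j$ and $|d_I| \leq C2^{-j/r}\|g\|_{\V^r}$ --- which is in fact all the paper's application needs, but is not the stated lemma. To get the clean constants, replace the oscillation stopping time by stopping on level sets of the accumulated variation $V(x) := \tilde{\V}^r\bigl(g;(-\infty,x]\bigr)^r$ at the levels $m\,2^{-j}\tilde{\V}^r(g)^r$: the partitions are then nested (each cell of $\mathcal{P}_{j-1}$ splits into at most two cells of $\mathcal{P}_j$), so the difference layer lives on $\mathcal{P}_j$ itself with $\leq 2^j$ disjoint intervals and its value on each cell is a \emph{single} increment $g(t')-g(t)$, bounded via the superadditivity $|g(t')-g(t)|^r \leq V(t')-V(t)$ by one level-gap, i.e.\ by $2^{-j/r}\tilde{\V}^r(g) \leq 2^{-j/r}\|g\|_{\V^r}$ (with one-sided sampling near crossings to avoid overshoot at jumps, using the $L^\infty$ part of the norm as the slack). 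That aligns the cardinality and coefficient bounds at the same index $j$, which is exactly what your refinement-plus-shift step cannot do.
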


Using this lemma, it is enough to establish the following:
\begin{proposition}
\[ \left\| \left( \sum_{\omega \in X} d_\omega 1_\omega \hat{f} \right)^{\vee} \right\|_{L^q(\RR)}
\lesssim \sup |d_\omega| |X|^{|\frac{1}{q} - \frac{1}{2}|} \| f\|_{L^q(\RR)}; \]
\end{proposition}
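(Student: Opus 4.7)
The plan is to interpolate between the trivial $L^2$ strong-type estimate and a weak-type $L^1 \to L^{1,\infty}$ estimate with constant $\sup|d_\omega|\sqrt{|X|}$, parallel to the strategy of Proposition~\ref{1-1}. Write $Tf := \bigl(\sum_\omega d_\omega 1_\omega \hat f\bigr)^\vee$. The $L^2$ bound is immediate from Plancherel and the pairwise disjointness of $X$,
\[ \|Tf\|_2^2 \;=\; \sum_\omega |d_\omega|^2 \int_\omega|\hat f|^2 \;\leq\; \sup|d_\omega|^2 \|f\|_2^2, \]
so $T$ is bounded on $L^2$ with constant $\sup|d_\omega|$. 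Since $T^{\ast}$ has the same form (with coefficients $\overline{d_\omega}$), by duality at the conjugate exponent it is enough to treat $1<q\leq 2$.

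The main step is the weak-type bound $\|Tf\|_{L^{1,\infty}} \lesssim \sup|d_\omega|\sqrt{|X|}\,\|f\|_1$; once this is in hand, Marcinkiewicz interpolation against the $L^2$ endpoint gives $\|Tf\|_q \lesssim \sup|d_\omega|\,|X|^{1/q - 1/2}\|f\|_q$ for $1<q\leq 2$, and duality yields the proposition. To prove the weak endpoint I would normalize $\sup|d_\omega|=1$ and the level to $1$, take the two endpoints of each $\omega$ to form the frequency set $\Sigma$ (so $|\Sigma|\leq 2|X|$), and apply Theorem~\ref{MFCZ} to $f$, producing $f = g + \sum_J b_J$. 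Chebyshev with MFCZ~(3) gives $|\{|Tg|>1/2\}| \lesssim \|g\|_2^2 \lesssim \sqrt{|X|}\|f\|_1$, and $\bigcup_J J^*$ is absorbed by MFCZ~(4). The task reduces to the per-piece estimate
\[ \int_{(J^*)^c} |Tb_J|\,dx \;\lesssim\; |J| \]
for each selected $J$.

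For this estimate I would use the identity $1_\omega = \tfrac{1}{2}\bigl(\sgn(\cdot - a_\omega) - \sgn(\cdot - b_\omega)\bigr)$ to rewrite $T$ as a linear combination of modulated Hilbert transforms conjugated by frequencies in $\Sigma$, and invoke MFCZ~(6): $\hat b_J(a_\omega) = \hat b_J(b_\omega) = 0$. Then at each scale $|I| = |\omega|^{-1}$ I would run the windowed-Fourier discretization of \S 3.2, using the endpoint cancellation of $\hat b_J$ in place of the subtraction $\rho(z) = \tilde\phi_{|I|}(z) - \tilde\phi_{|I|}(0)$ used there. The diagonal bound $\mathcal D \lesssim |I|^{-4}$ and the off-diagonal bound $K(\omega,\omega') \lesssim |I|^{-4}\min\{|\xi_\omega - \xi_{\omega'}|^{-1},\,|\xi_\omega - \xi_{\omega'}|^{-2}\}$ from \S 3.2 depend only on the discretizing Schwartz window and on the separation of the $\xi_\omega$, so they transfer unchanged; the sum $\sum_n n^{-3/2}$ still converges, and summing $|I|^{1/2}\cdot |I|^{-3/4}$ over the scales $|I| \geq |J|$ yields $|J|$. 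The scales $|I|<|J|$ are controlled by two-fold integration by parts in $\xi\in\omega$: the boundary terms vanish by MFCZ~(6), and the remaining integral gains an $|x|^{-2}$ factor that is $L^1$-integrable off $J^{\ast}$.

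The principal obstacle will be ensuring that the sharp cutoffs $1_\omega$ --- whose kernels decay only like $1/x$ --- are fully absorbed by the endpoint cancellation from MFCZ~(6) without picking up a logarithmic factor in $|X|$; it is exactly this clean $\sqrt{|X|}$ weak endpoint (and not $\sqrt{|X|}\log|X|$) that is needed to remove the $\epsilon$ of \cite{CRS} after interpolation.
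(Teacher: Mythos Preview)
Your overall architecture --- trivial $L^2$, weak $(1,1)$ via the Multi-Frequency Calder\'on--Zygmund decomposition, interpolation, duality --- is exactly the paper's, and your reduction to the per-interval bound $\int_{(J^*)^c}|Tb_J|\lesssim |J|$ is correct. The gap is in how you propose to handle the \emph{rough} cutoffs $1_\omega$.

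Two concrete problems. First, the windowed-Fourier discretization of \S 3.2 was carried out for Schwartz multipliers $\phi_\omega$; the summability in $l$ there relies on Schwartz decay of the physical-side kernel (the estimates $(*_{|l|})$ come from decay of $\phi'$). The kernel of $1_\omega$ is a modulated Dirichlet kernel with only $1/|x|$ tails, so its windowed Fourier coefficients decay only like $1/|l|$ and the $l$-sum diverges. The $\rho$-bounds $|\rho|,|\rho'|\lesssim|I|^{-2}$, $|\rho''|\lesssim|I|^{-3}$ that drive $\mathcal D$ and $K(\omega,\omega')$ likewise need smoothness. Second, your treatment of the ``small scales'' by \emph{two}-fold integration by parts in $\xi$ does not work: after the first integration by parts the boundary terms vanish by MFCZ~(6), but after the second one the boundary terms involve $\hat b_J'(a_\omega),\hat b_J'(b_\omega)$, which MFCZ~(6) says nothing about. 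A single integration by parts gives only $1/|x|$, which is not integrable off $J^*$. (There is also a structural mismatch: in \S 3 the scale parameter $k$ is part of the operator, whereas here $T$ has no intrinsic scale; your phrase ``at each scale $|I|=|\omega|^{-1}$'' tacitly assumes a multiscale decomposition of each $1_\omega$ that you have not performed.)

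The paper closes this gap by first smoothing: it takes a dyadic Whitney decomposition $1_\omega=\sum_{u(\omega)}\hat\phi_{u(\omega)}$ into smooth bumps with uniformly controlled derivatives, groups the $u(\omega)$ by scale $|I(u(\omega))|=2^{-j}$, and applies the MFCZ decomposition with frequencies at the \emph{centers} $c_\omega$ (not the endpoints). Once the multipliers are smooth, the \S 3.2 machinery transfers verbatim to each scale $j$: the discretization, the $\rho$-subtraction, and the diagonal/off-diagonal splitting all go through and yield $\|T_jb_J\|_1\lesssim 2^{-j/4}$ for $j\geq 0$, summable in $j$; the $j<0$ scales are handled by uniform Schwartz decay with $M=\log_2 N$ derivatives. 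Your endpoint-cancellation idea is attractive, but as written it does not deliver the needed off-$J^*$ integrability; the Whitney smoothing step is the missing ingredient.
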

by duality, it is enough to consider the $1 < q \leq 2$ case. We will establish the weak-type estimate

\begin{proposition}\label{weak}
\[ \left\| \left( \sum_{\omega \in X} d_\omega 1_\omega \hat{f} \right)^{\vee} \right\|_{L^{1,\infty}(\RR)}
\lesssim \sup |d_\omega| |X|^{1/2} \| f\|_{L^1(\RR)}. \]
\end{proposition}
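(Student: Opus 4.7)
The plan is to invoke the Multi-Frequency Calder\'on--Zygmund decomposition along the lines of the proof of Proposition \ref{1-1}. By the homogeneity of the statement in $\lambda$ and in $\sup_\omega|d_\omega|$, it suffices to prove $|\{|Tf|>1\}|\lesssim N^{1/2}\|f\|_1$ under the normalization $\sup_\omega|d_\omega|=1$, where $Tf := \sum_\omega d_\omega(1_\omega\hat f)^\vee$ and $N := |X|$. I would apply Theorem \ref{MFCZ} at height $1$, with $\Sigma=\{\xi_\omega\}_{\omega\in X}$ a choice of one representative frequency per interval, and write $f = g + \sum_J b_J$. For the good part, Plancherel and the disjointness of the intervals $\omega$ give
\[ \|Tg\|_2^2 \;=\; \sum_\omega |d_\omega|^2 \int_\omega |\hat g|^2 \;\leq\; \|g\|_2^2 \;\lesssim\; \sqrt{N}\,\|f\|_1, \]
so Chebyshev produces the desired weak-type control on $Tg$. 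For the bad part, property $(4)$ gives $|\bigcup_J J^*|\lesssim\sqrt{N}\,\|f\|_1$, so after discarding this set it remains to show $\sum_J\|T b_J\|_{L^1((J^*)^c)}\lesssim\sqrt{N}\,\|f\|_1$, which by translation and dilation invariance reduces to the uniform bound $\|Tb_J\|_{L^1((J^*)^c)}\lesssim|J|$; by rescaling we assume $J=[0,1)$.

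This last bound is the technical heart of the argument, and I would establish it by closely adapting the proof of Proposition \ref{1-1}. One expands each $1_\omega\hat{b_J}$ in a windowed Fourier series on $\omega$ to discretize $(1_\omega\hat{b_J})^\vee$ into a sum of integer translates of the sinc kernel $(1_\omega)^\vee$. The orthogonality $\hat{b_J}(\xi_\omega)=0$, together with $\supp b_J\subset 3J$ and $\|b_J\|_1\lesssim 1$, yields the Taylor-type bound $|\hat{b_J}(\xi)|\lesssim|\xi-\xi_\omega|\cdot\|y\, b_J\|_1\lesssim|\omega|$ for $\xi\in\omega$; this plays the role of the estimate $(*_0)$ in Proposition \ref{1-1}. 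After a Cauchy--Schwarz reduction in the $\omega$-sum, the remaining $L^2$-norms are controlled by a Schur-type $\mathcal{D}+\mathcal{O}$ decomposition paralleling the one in the second half of the proof of Proposition \ref{1-1}, with the off-diagonal kernel $K(\omega,\omega')$ now produced by integration by parts against the sinc kernel of $(1_\omega)^\vee$ rather than a Schwartz bump.

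The principal obstacle is the slow $|z|^{-1}$ decay of the sinc kernel $(1_\omega)^\vee$, which replaces the rapidly-decaying Schwartz kernel $(\widehat{\phi_\omega})^\vee$ used in Proposition \ref{1-1}; this weakens the pointwise remainder bound and forces a more delicate balance between the diagonal $\mathcal{D}$ and off-diagonal $\mathcal{O}$ contributions in the Schur step. Moreover, in passing from a single scale $|\omega|$ to the full family $X$ one must group the intervals into scale classes and combine the resulting estimates on dyadic annuli $\{|x|\sim 2^k\}$ with enough care that the Cauchy--Schwarz step across $\omega$ costs only $N^{1/2}$, rather than $N^{1/2}\log N$.
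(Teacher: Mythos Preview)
Your overall architecture---normalize, apply Theorem \ref{MFCZ} at height $1$ with one frequency per $\omega$, dispose of $Tg$ by Plancherel, discard $\bigcup J^*$, and reduce to $\|Tb_J\|_{L^1((J^*)^c)}\lesssim |J|$ with $J=[0,1)$---is exactly the paper's setup. The divergence is in how the last estimate is carried out.

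The paper does \emph{not} work with the sinc kernel $(1_\omega)^\vee$. Instead, it first performs a Whitney decomposition of each interval $\omega$ into dyadic subintervals $u(\omega)$ with $100\,u(\omega)\subset\omega$, and writes $1_\omega=\sum_{u(\omega)}\widehat{\phi}_{u(\omega)}$ as a sum of \emph{smooth} bumps adapted to the $u(\omega)$, with uniform (in $u$ and $\omega$) decay of order $M=\log_2 N$. After grouping the $u(\omega)$ by their scale $|I(u(\omega))|=2^{-j}$ into families $\F_j$ (with at most $O(1)$ Whitney pieces per $\omega$ at each scale), each $T_j$ is a smooth multi-frequency multiplier of exactly the type handled in Proposition \ref{1-1}, and the same discretization, Cauchy--Schwarz, and $\mathcal D+\mathcal O$ Schur argument go through verbatim to give $\|T_j b_J\|_1\lesssim 2^{-j/4}$ for $j\geq 0$. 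For $j<0$ the uniform $|z|^{-M}$ decay of the $\phi_{u(\omega)}$ yields $\|T_j b_J\|_{L^1((100J)^c)}\lesssim N\,2^{(M-1)j}$, summable since $M=\log_2 N$.

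Your plan to run Proposition \ref{1-1} directly with the sinc kernel is where the gap lies. Two concrete failures: first, in the analogue of Case 2 (intervals $\omega$ with $|\omega|\geq 1$), the paper's bound $\|\phi_\omega*b_J\|_{L^1((J^*)^c)}\lesssim |I|^{M-1}$ relied on Schwartz decay; since $(1_\omega)^\vee\notin L^1$, the corresponding quantity is not even finite without further cancellation, and the single vanishing moment of $b_J$ at $\xi_\omega$ does not supply enough. Second, in Case 1 the $\mathcal D+\mathcal O$ bound hinged on the estimates $|\rho|,|\rho'|\lesssim |I|^{-2}$ and $|\rho''|\lesssim |I|^{-3}$ for a Schwartz remainder; sinc derivatives do not improve decay, so the integration-by-parts step producing $K(\omega,\omega')\lesssim |\xi_\omega-\xi_{\omega'}|^{-3/2}|I|^{-4}$ breaks down. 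Your proposal flags this as ``the principal obstacle'' but does not resolve it; the paper's resolution is precisely the Whitney smoothing, which lets one inherit the Schwartz-kernel machinery of Proposition \ref{1-1} wholesale rather than rebalance a borderline estimate.
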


So, viewing $\{ \omega \}$ and $\{ d_\omega \}$ as fixed, without loss of generality $|d_\omega | \leq 1$, we let
\[ Tf = T_{\{ \omega \}, \{ d_\omega \} } := \left( \sum_{\omega \in X} d_\omega 1_\omega \hat{f} \right)^{\vee} \]
denote the relevant operator.

The proof of Proposition \ref{weak} will follow along similarly to that of Proposition \ref{1-1}; technical complications arise from the fact that our multipliers are given by (rough) indicator functions, rather than by (smooth) Schwartz weights.

We therefore begin with a smooth decomposition of each $1_\omega$.

Let $\{ u(\omega) \}$ be (dyadic) Whitney intervals so that for each $\omega$
\begin{itemize}
\item $100 u(\omega) \subset \omega$;
\item $\bigcup u(\omega) = \omega$ is an almost disjoint union;
\item there exists an absolute $K= O(1)$ so that
$\sum_{u(\omega)} 1_{20u(\omega)} \leq K$, i.e.\ the collection $\{ 20 u(\omega) \}$ has bounded overlap.
\end{itemize}

Associated to this decomposition, we define two families of smooth functions,
\[ \{ \phi_{u(\omega)} \} \ \text{ and } \{A_{u(\omega)} \}. \]

The $\{\phi_{u(\omega)} \}$ satisfy
\[ 1_\omega = \sum_{u(\omega)} \widehat{\phi}_{u(\omega)}, \ \text{ and }\supp \widehat{\phi}_{u(\omega)} = I(u(\omega)),\]
 where $I(u(\omega))$ is an interval concentric with $u(\omega)$ of without loss of generality dyadic length and $2 < \frac{|I(u(\omega))|}{|u(\omega)|}  \leq 4$.
By our Whitney decomposition, we may assume that there exists an absolute $R = O(1)$ so that for each $\omega$ and each $j$,
\[ \# \{ u(\omega) : I(u(\omega)) = 2^{-j} \} \leq R.\]

We similarly define $\widehat{A}_{u(\omega)}$ to be smooth functions supported in $15 u(\omega)$ and identically equal to one on $10u(\omega)$.

We shall require uniform decay on $\{ \phi_{u(\omega)} \}$ and $\{ A_{u(\omega)} \}$: if $I(u(\omega)) = 2^{-j}$, then we demand that
\[ \left\| \widehat{\phi}_{u(\omega)}^{(M)} \right\|_\infty + \left\| \widehat{A}_{u(\omega)}^{(M)} \right\|_\infty \leq C_M 2^{Mj} \]
for $M = \log_2 N $.

Finally, we consider a family of smooth, positive, mean-one mollifiers,
\[ \int \eta_j = 1, \ \supp \eta \subset \left\{ \xi: |\xi| \leq \frac{1}{1000} 2^{-j} \right\},\]
so that for $|I(u(\omega))| = 2^{-j}$,
\[ \eta_j * \hat{A}_{u(\omega)} = 1 \]
on $5I(u(\omega))$ (say).

Next, if $c_\omega$ denotes the center of each interval $\omega$, we define rescaled, shifted-to-the-origin versions of our multipliers $\left\{ \phi_{u(\omega)}, A_{u(\omega)} \right\}$:

For $|I(u(\omega))| = 2^{-j}$, we define
\[
\hat{\phi}_{u(\omega)*}(2^j(\xi - c_\omega)) := \hat{\phi}_{u(\omega)}(\xi)
\text{ i.e. }
\hat{\phi}_{u(\omega)*}(\xi) := \hat{\phi}_{u(\omega)}(\frac{\xi}{2^j} + c_\omega),\]
and similarly for $A_{u(\omega)*}$.

Collect
\[ \F_j:= \left\{ u(\omega) : |I(u(\omega))| = 2^{-j} \right\} \]
and sparsify
\[ \F_j = \bigcup_{i=1}^{RK} \F_j^k \]
into $RK$ families, so that
\begin{itemize}
\item Each $\F_j^k$ has at most one $u(\omega)$ with $|I(u(\omega))|= 2^{-j}$ from each $\omega$ (i.e.\ if $u(\omega), u'(\omega') \in \F_j^k$ with $|I(u(\omega))| = |I(u'(\omega'))|$, then necessarily $\omega \neq \omega'$;
\item For each $u(\omega) \in \F_j^k$, the supports of each $\hat{A}_{u(\omega)}$ are disjoint.
\end{itemize}

Since our estimates will be uniform in $k$, in what follows, we shall suppress all dependence of $k$ in our $\F_j^k$.

With this in mind, we decompose
\[ \widehat{Tf} := \sum_j \widehat{T_j f} := \sum_j \left( \sum_{\F_j} d_{u(\omega)} \hat{\phi}_{u(\omega)} \hat{f} \right),\]
where we denote $d_{u(\omega)} = d_\omega$ for ${u(\omega)} \subset \omega$.

Following along the lines of the previous section we discretize each $T_j$:

With $|I({u(\omega)})| = 2^{-j}$ we used windowed Fourier series to express
\[ \hat{\phi}_{u(\omega)}(\xi) \hat{f}(\xi) = 2^j \sum_{l \in \Z} \left( \int \hat{f}(t) \hat{\phi}_{u(\omega)}(t) e( 2^j l t) \ dt \right) e( - 2^j l \xi) \hat{A}_{u(\omega)}*\eta_j(\xi). \]
After an argument with the Fourier transform, we have
\[ \aligned
& d_{u(\omega)} \phi_{u(\omega)} * f(x) \\
& \qquad = \sum_{l \in \Z} \langle f(x), e(-c_\omega x) 2^{-j} \phi_{u(\omega)*}(2^{-j}x - l) \rangle \cdot \big( e( c_\omega x) A_{u(\omega)*}(2^{-j} x - l) \big) \left( d_{u(\omega)} \cdot \eta_j^{\vee}(x - 2^j l) \right) \\
& \qquad =:
\sum_{l \in \Z} \langle f(x), e(-c_\omega x) 2^{-j} \phi_{u(\omega)*}(2^{-j}x - l) \rangle \cdot \big( E_{u(\omega),j,l}(x) \big) \left( d_{u(\omega)} \cdot \eta_j^{\vee}(x - 2^j l) \right)
; \endaligned\]
for convenience, we remark that
\[ \aligned
\widehat{E_{u(\omega),j,l}}(\xi) &:= \widehat{ \left( e(c_\omega x) A_{u(\omega)*} (2^{-j}x - l) \right) } ( \xi ) \\
& = e(-\xi 2^j l ) e(c_w 2^j l) \hat{A}_{u(\omega)}(\xi). \endaligned
\]

We now apply the Multi-Frequency Calder\'{o}n-Zygmund decomposition at height $\lambda = 1$, and extract $b = \sum_J b_J$ orthogonal to $\{ e( -c_\omega x ) : \omega \in X\}$.
By standard arguments, and scale and translation invariance, our task is once again to show that
\[ \| T b_J \|_{L^1( \{ |x| \geq 100 \} )} \lesssim 1 \]
where we may assume that $J = [0,1)$.

We will majorize
\[ \| T b_J \|_{L^1( \{ |x| \geq 100 \} )} \leq \sum_j \| T_j b_J \|_{L^1( |x| \geq 100 )} \]
according to scale and estimate each term separately.

We turn to
\subsection{The Proof of Theorem \ref{rvar}}
As mentioned, the strategy here is very similar to that of Proposition \ref{1-1}.

We begin with the more involved case, $j \geq 0$
\begin{proof}[The Proof, $j \geq 0$ ]
We further decompose
\[ \aligned
T_j b_J &:= \sum_l T_{j,l} b_J \\
&:= \sum_l \left( \sum_{\F_j} \langle b_J(x), e(-c_\omega x) 2^{-j} \phi_{u(\omega)*}(2^{-j}x - l) \rangle \cdot \big( e(c_\omega x) A_{u(\omega)*}(2^{-j}x-l) \big) \right) \cdot \left(d_{u(\omega)} \eta_j(x - 2^j l) \right) \\
&= \sum_l \left( \sum_{\F_j} \langle b_J(x), e(-c_\omega x) 2^{-j} \phi_{u(\omega)*}(2^{-j}x - l) \rangle \cdot E_{u(\omega),j,l}(x) \right) \cdot \left(d_{u(\omega)} \eta_j(x - 2^j l) \right)
; \endaligned \]
we will estimate each $\| T_{j,l} b_J \|_1$, and sum over $(j,l)$; by the arguing as above, and using the \emph{uniform} decay of the $\{ \phi_{u(\omega)*} \}$, it will be enough to estimate the top $l=0$ term. We will therefore suppress $l$-dependence in what follows.

We begin by applying Cauchy-Schwarz to estimate
\[  \| T_j b_J \|_1 \lesssim 2^{j/2} \left\|
\sum_{\F_j} \langle b_J(x), e(-c_\omega x) 2^{-j} \phi_{u(\omega)*}(2^{-j}x) \rangle E_{u(\omega),j}(x) \right\|_2,\]
then use the orthogonality of
\[ \left\{ E_{u(\omega),j}(x) \right\} \]
(see above) to express
\[ \aligned
& \left\| \sum_{\F_j} \langle b_J(x), e(-c_\omega x) 2^{-j} \phi_{u(\omega)*}(2^{-j}x) \rangle E_{u(\omega),j}(x) \right\|_2 \\
& \qquad =
\left\| \left( \sum_{\F_j} |\langle b_J(x), e(-c_\omega x) 2^{-j} \phi_{u(\omega)*}(2^{-j}x) \rangle |^2 |E_{u(\omega),j}(x)|^2 \right)^{1/2} \right\|_2\\
& \qquad =
\left\| \left( \sum_{\F_j} |\langle b_J(x), e(-c_\omega x) 2^{-j} \phi_{u(\omega)*}(2^{-j}x) \rangle |^2 |A_{u(\omega)*}(2^{-j}x)|^2 \right)^{1/2} \right\|_2. \endaligned \]
Since $b_J$ is orthogonal to each $e(c_\omega x)$, we may replace the inner products
\[ \aligned
\langle b_J(x), e(-c_\omega x) 2^{-j} \phi_{u(\omega)*}(2^{-j}x) \rangle &\equiv
\langle b_J(x), e(-c_\omega x) \left( 2^{-j} \phi_{u(\omega)*}(2^{-j}x) -
2^{-j} \phi_{u(\omega)*}( 0 ) \right) \rangle \\
&=: \langle b_J(x), e(-c_\omega x) \rho_{u(\omega),j}(x) \rangle.
\endaligned \]
We observe that for $|x| \leq 5$ (say) we have the $\rho_{u(\omega),j}$-\emph{uniform} bounds
\[ \aligned
|\rho_{u(\omega),j}| + |\rho_{u(\omega),j}'| &\lesssim 2^{-2j} \\
|\rho_{u(\omega),j}''| &\lesssim 2^{-3j} , \endaligned\]
the first point following by the mean-value theorem.

We now majorize
\[ \aligned
& \left( \sum_{\F_j} |\langle b_J(x), e(-c_\omega x) \rho_{u(\omega),j}(x) \rangle |^2 |A_{u(\omega)*}(2^{-j}x)|^2 \right)^{1/2} \\
& \qquad \leq
\left( \sum_{\F_j} |\langle f_J(x), e(-c_\omega x) \rho_{u(\omega),j}(x) \rangle |^2 |A_{u(\omega)*}(2^{-j}x)|^2 \right)^{1/2} \\
& \qquad \qquad +
\left( \sum_{\F_j} |\langle g_J(x), e(-c_\omega x) \rho_{u(\omega),j,l}(x) \rangle |^2 |A_{u(\omega)*}(2^{-j}x)|^2 \right)^{1/2} \\
& \qquad =: F(x) + G(x), \endaligned \]
and estimate each function separately in $L^2$.

Using our uniformity conditions on the $\{\rho_{u(\omega),j} \}, \{ A_{u(\omega)} \}$, we now find ourselves in the situation of Proposition \ref{1-1}, and we may argue as above to conclude
\begin{itemize}
\item $\| F \|_2 \lesssim 2^{ - 3j/2 }$; and
\item $\| G\|_2 \lesssim 2^{-3j/4}$.
\end{itemize}

Consequently, we have achieved the upper bound on
\[ \| T_j b_J \|_1 \lesssim 2^{j/2} \cdot 2^{-3j/4}  \lesssim 2^{-j/4}.\]
We may similarly deduce
\[ \| T_{j,l} b_J \|_1 \lesssim 2^{-j/4} \cdot \left( |l| + 1 \right)^{-2}; \]
since this is summable over $l \in \Z, j \geq 0$, we have completed this part of the proof.
\end{proof}

We next turn to the simpler $j < 0$ case, where we rely solely on the decay of our functions.
\begin{proof}[The Proof, $j < 0$]
Using our (uniform) decay estimates on $\phi_{u(\omega)}$,
we simply estimate, with $M = \log_2N$
\[ \aligned
\| T_j b_J \|_{L^1( (100J)^* )} &\leq N \max_{ {u(\omega)} \in \F_j } \left\| \phi_{u(\omega)} * b_J \right\|_{L^1( (100J)^c )} \\
&\lesssim N \| b_J \|_1 \left\| 2^{-j} \min\left\{ 1, \left( \frac{2^j}{|x|} \right)^M \right\}\right\|_{L^1(|x| \geq 50)} \\
&\lesssim N (2^j)^{M-1}. \endaligned \]
Summing the foregoing in $j$ yields an upper estimate of
\[ N 2^{1-M} \lesssim 1 \]
for $M= \log_2N$.
\end{proof}

\begin{remark}
An interesting question, raised in \cite{RO}, concerns the $L^p$-behavior of Bourgain's \emph{maximal} (rough) singular integral $\S 1$:
\begin{problem}
For any collection of $N$ frequencies, $\Sigma := \{\xi_1,\dots, \xi_N \} \subset \RR$, consider the maximal operator
\[ \M^*f:= \sup_j | \left( \hat{f} 1_{R_j} \right)^{\vee}|(x), \]
where $R_j$ is the $2^{-j}$ neighborhood of $\Sigma$.
For $p \neq 2$, what are the best $N$-dependent constants, $C_p(N)$, so that
\[ \| \M^*f \|_{L^p(\RR)} \leq C_p(N) \|f\|_{L^p(\RR)}? \]
\end{problem}
Although the operator $\M^*f$ is too rough to be handled by present technique, we look forward to pursuing this line of inquiry in future work.
\end{remark}

\end{document}